\let\cite=\citet
\begin{document}

\newcommand\footnotemarkfromtitle[1]{%
\renewcommand{\thefootnote}{\fnsymbol{footnote}}%
\footnotemark[#1]%
\renewcommand{\thefootnote}{\arabic{footnote}}}

\title{Positive asymptotic preserving approximation of the radiation transport equation\footnotemark[1]}

\author{Jean-Luc Guermond\footnotemark[2] \and Bojan Popov\footnotemark[2] 
\and Jean Ragusa\footnotemark[3]}

\date{Draft version \today}

\maketitle

\renewcommand{\thefootnote}{\fnsymbol{footnote}} \footnotetext[1]{This
  material is based upon work supported by a ``Computational R\&D
    in Support of Stockpile Stewardship'' grant from Lawrence
    Livermore National Laboratory, the National Science Foundation
  grants DMS-1619892 and DMS-1620058, by the Air Force Office of
  Scientific Research, USAF, under grant/contract number
  FA9550-15-1-0257, and by the Army Research Office under
  grant/contract number W911NF-15-1-0517. Draft version, \today }
\footnotetext[2]{Department of Mathematics, Texas A\&M University 3368
  TAMU, College Station, TX 77843, USA} \footnotetext[3]{Department of
  Nuclear engineering, College Station, TX 77843, USA}
\renewcommand{\thefootnote}{\arabic{footnote}}

\begin{abstract}
  We introduce a (linear) positive and asymptotic preserving method
  for solving the one-group radiation transport equation.  The
  approximation in space is discretization agnostic: the space
  approximation can be done with continuous or discontinuous finite
  elements (or finite volumes, or finite differences).  The method is
  first-order accurate in space. This type of accuracy is coherent
  with Godunov's theorem since the method is linear.  The two key
  theoretical results of the paper are
  Theorem~\ref{Th:diffusion_limit_AP} and
  Theorem~\ref{Th:AP_positivity}. The method is illustrated with
  continuous finite elements. It is observed to converge with the rate
  $\calO(h)$ in the $L^2$-norm on manufactured solutions, and it
  is $\calO(h^2)$ in the diffusion regime. Unlike other standard
  techniques, the proposed method does not suffer from overshoots at
  the interfaces of optically thin and optically thick regions
\end{abstract}

\begin{keywords}
  Finite element method, radiation transport, diffusion limit,
  asymptotic preserving, positivity preserving
\end{keywords}

\begin{AMS}
65N30, 65N22, 82D75, 35Q20
\end{AMS}

\pagestyle{myheadings} \thispagestyle{plain} \markboth{J.-L. GUERMOND, B. POPOV, 
J. RAGUSA}{Approximation of the radiation transport equation}

\section{Introduction}\label{sec:Introd}
Constructing approximations of the radiation transport equation that
are both positive and robust, \ie do not lock, in the diffusion
limit is a difficult task.  Diffusive and optically thick regimes
occur when the physical medium is many mean-free-path thick and the
interaction processes are dominated by scattering (\ie absorption is
weak or non existent). Here the words ``robust'' and ``locking'' are
used in the sense defined by \cite{Babuska_Suri_1992}; this
terminology is common in the elliptic literature.  In the
hyperbolic literature, approximation techniques that are robust with
parameters tending to  limiting values are often called
asymptotic-preserving in reference to \cite{Jin_1999}. These two
terminologies are use interchangeably in the paper.

In the wake of \cite{Reed_Hill_1973} and \cite{Lesaint_Raviart_1974},
a dominant paradigm in the kinetic literature to solve the radiation
transport equation consists of using the discontinuous Galerkin (dG)
technique with the upwind flux. Unfortunately, to the best of our
knowledge, there does not exist yet in the literature a discontinuous
Galerkin technique that is both positive and does not lock in the
thick diffusion limit.  For instance, it was pointed out in
\cite{Larsen_1983} that the finite volume scheme ``step scheme'' (\ie
piecewise constant dG) with standard upwind, locks in the diffusion
limit. Several variations of the ``step scheme'' have been analyzed in
\cite{Larsen_Morel_Miller_1987}: it was shown that the ``Lund-Wilson''
and the ``Castor'' variants yield cell-edge
angular fluxes that also lock in the diffusion limit.
Furthermore, the cell-edge fluxes for these schemes cannot reproduce
the infinite medium solution.  A ``new'' scheme was proposed in
\cite{Larsen_Morel_Miller_1987} but was subsequently dismissed due to
a poor behavior at the boundaries.  For many years, the
diamond-difference scheme was found to be the best performing
finite-difference scheme, even though its cell-edge fluxes lock in the
thick diffusion limit.  In \cite{Larsen_Morel_1989}, most of the
previous schemes have been set aside in favor of the linear
discontinuous finite element scheme (the piecewise linear dG technique
with standard upwinding).

The cause for locking has been identified in a seminal paper
by~\cite{Adams_2001}. The author analyzed multi-dimensional dG
approximations and showed that some dG schemes lock in the diffusion
limit because the upwind numerical flux forces the scalar flux, and thus the
angular flux, to be continuous across the mesh cells. This observation
has been confirmed in~\cite{Guermond_Kanschat_2010}, where the
equivalence of the limit problem to a mixed discretization for the
Laplacian was proved and the nature of the boundary layers was
discussed.  The asymptotic analysis in~\citep{Adams_2001}
and~\citep{Guermond_Kanschat_2010} suggests that the problem could be
alleviated by modifying the upwind numerical flux. 
By making the amount of stabilization dependent on the scattering
cross section so that the amount of upwinding decreases as the
scattering cross section increases, it is shown in
\cite{Ragusa_Guermond_Kanschat_2012} that locking can indeed be
avoided in the thick diffusive limit, including for the dG0
approximation.  The dG scheme thus obtained converges robustly for
finite element spaces of any polynomial order including piecewise
constant functions (dG0), but, like the other methods mentioned above,
it is not guaranteed to be positive.

The objective of this work is to revisit the approximation theory for
the radiation transport equation in heterogeneous media by using the
algebraic framework (\ie discretization-independent) introduced in
\cite{Guermond_Popov_2016,Guermond_Popov_Tomas_2019} and by
incorporating in a roundabout way some ideas from
\cite{Gosse_Toscani_2002} and \citep{Ragusa_Guermond_Kanschat_2012}.
We propose a method that is both positivity-preserving and does not
lock in the thick diffusion limit. (The method shares some
similarities with the two-dimensional finite volume technique from
\cite[Eq.~(18)-(19)]{Buet_Despres_Emmanuel_2012}.) Being linear, and
in compliance with Godunov's theorem, the proposed algorithm is only
first-order accurate in space though.  This work is the first part of
a ongoing project aiming at developing techniques that are high-order
accurate, positivity-preserving, and robust in the diffusion limit.
The next step will be to increase the accuracy by introducing a
nonlinear process; however, since this is not the purpose of the
paper, we just mention in passing possible techniques to achieve this
goal.  This could be done in many ways; for instance, one could invoke
a smoothness indicator like in \cite[\S4.3]{Guermond_Popov_2017}, one
could use a limiting technique in the spirit of the flux transport
corrected method, or one could enforce positivity through inequality
constraints like in \cite[\S4]{Hauck_McClarren_2010}.

The paper is organized as follows.  We introduce the model problem and
the discrete setting (continuous and discontinuous finite elements) in
\S\ref{Preliminaries}.  The notion of graph viscosity, as defined in
\citep{Guermond_Popov_2016,Guermond_Popov_Tomas_2019}, is introduced
in \S\ref{Sec:graph_viscosity_positivity_locking}.  We show in this
section that the graph viscosity gives a scheme that is positive, but
the scheme locks in the diffusion regime. This section is meant to
give some perspective on the material introduced in
\S\ref{sec:the_AP_scheme}. The positive and asymptotic preserving
scheme announced above is introduced in
\S\ref{sec:the_AP_scheme}. Originality is only claimed for the
material presented in this section and the next one; the key results
are Theorem~\ref{Th:diffusion_limit_AP} and
Theorem~\ref{Th:AP_positivity}.  In \S\ref{Sec:Numerical_results} we
report numerical experiments illustrating the performance of the
proposed method. The paper finishes with \S\ref{Sec:Conclusions} where
we make concluding remarks.

\section{Preliminaries} \label{Preliminaries}
In this section, we introduce the model problem under investigation 
and some notation regarding the discretization.

\subsection{The model problem}
Let $\Dom$ be an open, bounded, connected Lipschitz domain in $\Real^3$ and let
$\calS$ be the unit sphere in $\Real^3$.  We denote by $\mes{\calS}$
the measure of $\calS$, \ie $\mes{\calS}=4\pi$.  The boundary of
$\Dom$ is denoted by $\partial\Dom$ and the outer unit normal is
denoted by $\bn$.  We want to solve the linear, one-group, radiation transport equation
\begin{subequations}
\begin{align}\bOmega\ADV\psi(\bx,\bOmega) + \sigma_t(\bx)\psi(\bx,\bOmega)
&= \sigma_s(\bx)\opsi(\bx) +q(\bx,\bOmega), && (\bx,\bOmega)\in \Dom\CROSS\calS\\
\psi(\bx,\bOmega)&=\alpha(\bx,\bOmega), && (\bx,\bOmega)\in \front_{-} \\
\opsi(\bx) &= \frac{1}{\mes{\calS}} \int_\calS \psi(\bx,\bOmega) \diff \bOmega, &&  \bx\in \Dom,
\end{align}  \label{model_pb}%
\end{subequations}%
with
$\front_{-}:=\{(\bx,\bOmega)\in \partial\Dom\times\calS \,|\,
\bOmega\SCAL\bn(\bx) <0\}$.
The independent variable $(\bx,\bOmega)$ spans $\Dom\CROSS\calS$.  The
dependent variable $\psi(\bx,\bOmega)$ is referred to as the angular
intensity or angular flux, and the quantity $\opsi(\bx)$ is called
scalar intensity or flux. The symbols $\sigma_t(\bx)$ and $\sigma_s(\bx)$ denote the total and
scattering cross sections, respectively.

We want to investigate the approximation of \eqref{model_pb} using
either continuous or discontinuous finite elements. The objective is
to construct a method that is asymptotic preserving in
the diffusion limit and positive (assuming that the boundary data, the cross
sections, and the source term are non-negative).  In order to do that, we
are going to adopt an idea from \cite{Gosse_Toscani_2002}, where a
relaxation of the so called hyperbolic heat equation is introduced,
and an idea from \cite{Ragusa_Guermond_Kanschat_2012} where, in
addition to the mesh size, the stabilization parameters of the
approximation have been made to depend on the cross sections as well.

\subsection{Angular discretization}
In order to simplify the presentation we assume that the
discretization in angle is done using a discrete ordinate
technique. The (finite) angular quadrature is denoted
$(\mu_l,\bOmega_l)_{l\in\calL}$ and is assumed to satisfy
\begin{equation}
  \sum_{l\in\calL} \mu_l = \mes{\calS} ,\quad  \sum_{l\in\calL}\mu_l\bOmega_l=\bzero,\quad \sum_{l\in\calL}
\bOmega_l |\bc \SCAL \bOmega_l|=\bzero,\quad
\sum_{l\in\calL} \mu_l \bOmega_l{\otimes} \bOmega_l = \frac{\mes{\calS}}{3} \polI, \label{angular_quadrature}
\end{equation}
for all $\bc\in\Real^3$, where $\polI$ is the $3\CROSS 3$ identity
matrix. Recall that $\mes{\calS}=4\pi$.  For further reference we also
define the set $\calA_L:=\{\bOmega_l\in\Real^3,\ l\in\calL\}$, with
$L:=\text{card}(\calL)$.

\subsection{Continuous finite elements} \label{sec:CG}
We describe in this section the Galerkin approximation of
\eqref{model_pb} with continuous finite elements. This technique is
not positive and is known to exhibit severe oscillations; it will be appropriately
stabilized in \S\ref{sec:the_AP_scheme}.

Let $\famTh$ be a shape-regular sequence of unstructured matching
meshes. For simplicity we assume that all the elements are generated
from a reference element denoted $\wK$.  The geometric transformation
mapping $\wK$ to an arbitrary element $K\in \calT_h$ is denoted
$T_K : \wK \longrightarrow K$. We now introduce a reference finite
element $(\wK,\wP,\wSigma)$, which we assume, for simplicity, to be a
Lagrange element.  We define the following scalar-valued finite
element space:
\begin{align} 
\label{eq:Xh}
P\upg(\calT_h) &=\{ v\in \calC^0(\Dom;\Real)\st 
v_{|K}{\circ}T_K \in \wP,\ \forall K\in \calT_h\}.
\end{align}  
The superscript $\upg$ is meant to remind us that the space is
conforming for the gradient operator, \eg
$P\upg(\calT_h)\subset H^1(\Dom)$. The global shape functions are
denoted by $\{\varphi_i\}_{i \in \calV}$; the associated Lagrange
nodes are denoted $\{\ba_i\}_{i \in \calV}$. We recall that the global
shape functions satisfy the partition of unity property
$\sum_{i \in \calV}\varphi_i(\bx) =1$, for all $\bx \in\Dom$. We
assume that they have positive mass
\begin{equation}
m_i:= \int_\Dom \varphi_i(\bx)\diff \bx >0, \qquad \forall i\in\calV.
\end{equation}  For
any $i\in \calV$, the adjacency list $\calI(i)$ is defined by setting
$\calI(i):=\{j\in\calV\st \varphi_i\varphi_j\not\equiv 0\}$.  The approximation space for~\eqref{model_pb}
is then defined to be
\begin{equation}
  \bP\upg(\calT_h,\calA_L) := 
\underbrace{P\upg(\calT_h)\CROSS \ldots \CROSS P\upg(\calT_h)}_{\text{$L$ times}}.
\end{equation}

Let $\sigma_{t,i}$ and $\sigma_{s,i}$ be consistent approximations of
$\sigma_t$ and $\sigma_s$ at the Lagrange node $\ba_i$. For instance
let us assume that the mesh $\calT_h$ is such that $\sigma_t$ and
$\sigma_s$ are continuous over each cell $K$ in $\calT_h$ ($\sigma_t$
and $\sigma_s$ can be discontinuous across some mesh interfaces). Let
us denote $\calT(i)=\{K\in\calT_h\st \ba_i \in K\}$. Then we can set
$\sigma_{t,i} = \frac{1}{\text{card}(\calT(i))}
\sum_{K\in\calT(i)}\sigma_{t|K}(\ba_i)$
and
$\sigma_{s,i} = \frac{1}{\text{card}(\calT(i))}
\sum_{K\in\calT(i)}\sigma_{s|K}(\ba_i)$.
For further reference we denote the absorption cross section at node
$\ba_i$ by $\sigma_{a,i}:= \sigma_{t,i}-\sigma_{s,i}$.

Let
$\bpsi_h :=
(\psi_{h,1},\ldots,\psi_{h,L})\in\bP\upg(\calT_h,\calA_L)$, with
$ \psi_{h,k} := \sum_{j\in\calV} \Psi_{ik}\varphi_j\in
P(\calT_h)$ for all $k\in\calL$, be the discrete ordinate Galerkin
approximation of~\eqref{model_pb}. The field
$\bpsi_h\in\bP\upg(\calT_h,\calA_L)$ is obtained by solving the
following set of linear equations:
\begin{subequations}
\begin{align}
\sum_{j\in\calI(i)}\!\!\Psi_{jk}\!\! \int_\Dom\!  (\bOmega_k\SCAL \GRAD \varphi_j) \varphi_i\diff \bx 
+ m_i \sigma_{t,i} \Psi_{ik} &= m_i \sigma_{s,i} \oPsi_{i} + m_i q_{ik} + b_{ik}^\partial (\alpha_{ik}^\partial-\Psi_{ik}),\\
\oPsi_{i} &= \frac{1}{\mes{\calS}} \sum_{k\in\calL} \mu_k \Psi_{ik},
\end{align}
\end{subequations}
where we have lumped the mass matrix, defined
$q_{ik}:=\frac{1}{m_i}\int_\Dom \varphi_i(\bx) q(\bx,\bOmega_k)\diff \bx$, and
set
\begin{equation}
b_{ik}^\partial = 
 m_i^\partial \frac{|\bOmega_k\SCAL \bn_i|-\bOmega_k\SCAL \bn_i}{2}.
\end{equation}  
Here $ m_i^\partial :=\int_{\front} \varphi_i(\bx) \diff s$, $\bn_i$ is
the unit normal vector (or approximation thereof) at the Lagrange node $\ba_i$, and
$\alpha_{ik}^\partial:=\alpha(\ba_i,\bOmega_k)$.  
To  refer to boundary
degrees of freedom we introduce the following set of indices:
\begin{equation}
(\calV\CROSS\calL)^\partial:=\{(j,l)\in \calV\CROSS \calL \st
  \bOmega_l\SCAL \bn_j< 0\}.
\end{equation}
For further reference we introduce
\begin{equation}
\bc_{ij}:= \int_D  \varphi_i(\bx)\nabla\,\varphi_j(\bx)\,\diff \bx.
\end{equation}
With this notation, the discrete system is rewritten as follows for all $(i,k)\in\calV\CROSS\calL$
\begin{equation}
 \sum_{j\in\calI(i){\setminus}\{i\}}\!\! \bOmega_k\SCAL\bc_{ij} (\Psi_{jk} - \Psi_{ik})
+ m_i \sigma_{t,i} \Psi_{ik} = m_i \sigma_{s,i} \oPsi_{i} + m_i q_{ik}
+b_{ik}^\partial (\alpha_{ik}^\partial-\Psi_{ik}). \label{cG_system}
\end{equation}
Notice that here we have used the partition of unity property which
implies that $\sum_{j\in\calI(i)} \bc_{ij} =\bzero$.

\begin{remark}[Boundary conditions] We have imposed the boundary
  condition weakly in \eqref{cG_system} by using the penalty technique
  usually invoked in the context of discontinuous Galerkin
  approximations, but one can also enforce the boundary conditions
  strongly. In that case one sets $b_{ik}^\partial=0$ and one adds  the
  equations $\Psi_{ik}=\alpha_{ik}^\partial$ to \eqref{cG_system} for all
  $(i,k)\in(\calV\CROSS\calL)^\partial$.
\end{remark}

As mentioned above, the linear system~\eqref{cG_system} has no
positivity property. We are going to remedy this problem in
\S\ref{sec:the_AP_scheme}.

\subsection{Discontinuous finite elements}\label{sec:DG}
We briefly describe in this section the discontinuous Galerkin 
approximation of~\eqref{model_pb} with the centered numerical flux.

We use the same notation as in \S\ref{sec:CG} for the shape-regular
sequence of unstructured matching meshes $\famTh$. We also introduce a
reference finite element $(\wK,\wP,\wSigma)$. This may not be a
Lagrange element. We define the following scalar-valued broken finite element
space:
\begin{align} 
\label{eq:dG_Xh}
P\upb(\calT_h) &=\{ v\in L^1(\Dom;\Real)\st 
v_{|K}{\circ}T_K \in \wP,\ \forall K\in \calT_h\}.
\end{align} 
The superscript $\upb$ is meant to remind us that the space is broken,
\ie the members of $P\upb(\calT_h)$ can be discontinuous across the
mesh interfaces.  We denote by $\{\varphi_i\}_{i\in\calV}$ the
collection of the global shape functions generated from the reference
shape functions. The support of each shape function is restricted to
one mesh cell only.  We assume that all the shape functions have a
positive mass
\begin{equation}
m_i:=\int_\Dom \varphi_i \dif x >0,\quad \forall i\in\calV.
\end{equation}

We
introduce the following adjacency sets:
\begin{align}\label{def:setpartialK}
\calI(K) := \big\{ i \in \calV \st \varphi_{i|K} \not \equiv 0 \big\},\qquad 
  \calI(\partial K) := \big\{ i \in \calV \st \varphi_{i|\partial K} \not \equiv 0 \big\}.
\end{align}
Note that $\calI(\partial K)$ not only includes indices of shape
functions with support in $\calI(K)$, but this set also includes
indices of shape functions that do not have support in $K$. More
precisely $\calI(\partial K)$ is the union of two disjoint sets
$\calI(\partial K\upi)$ and $\calI(\partial K\upe)$ defined as
\begin{align}
\calI(\partial K\upi) 
&:= \big\{ i \in \calI(K) \ \big| \ \varphi_{i|\partial K} \not \equiv 0 \big\}, 
\qquad  \calI(\partial K\upe) 
:= \calI(\partial K)\backslash \calI(\partial K\upi).
\end{align}
For any $i\in \calV$, let $K\in\calT_h$ be such that $i\in\calI(K)$;
then we define the adjacency set $\calI(i)$ to be the collection of
the indices $j\in\calV$ such that either $j\in \calI(K)$ and $\varphi_i\varphi_j|_{K}\not\equiv 0$, or
$j\in \calI(\partial K\upe)$ and
$\varphi_i\varphi_j|_{\partial K}\not\equiv 0$.

Let $K\in\calT_h$. We finally assume that the reference finite element
is such that the sets of shape functions
$\{\varphi_j\}_{j \in \calI(K)}$ form a partition of unity over $K$,
and the shape functions $\{\varphi_j\}_{j \in \calI(\partial K\upi)}$,
$\{\varphi_j\}_{j \in \calI(\partial K\upe)}$ form partitions of unity
over $\partial K$, \ie
\begin{align}
\label{boundary_partition_unity}
 \sum_{j \in \calI(K)} \varphi_{j|K} = 1, \qquad 
  \sum_{j \in \calI(\partial K\upi)} \varphi_{j|\partial K} = 1,
\ \text{ and  } \sum_{j \in \calI(\partial K\upe)} \varphi_{j|\partial K} = 1.
\end{align}
Let $i\in V$, $j\in\calI(i)$, let us set
\begin{equation}
\label{def_of_cij_bulk_and_bnd}
\bc_{ij}^{K}:= \int_{K}\varphi_i \GRAD\varphi_j \diff \bx,\qquad
\bc_{ij}^{\partial K} := \tfrac{1}{2} \int_{\partial K} \varphi_{j} \varphi_i \bn_K \diff s,
\end{equation}
and let us define the vector $\bc_{ij}$ as follows:
\begin{equation}
\label{def:cijdG}
\bc_{ij} :=
 \begin{cases} 
\bc_{ij}^K                               &\text{if }j \in \calI(K)\backslash\calI(\partial K^{\mathsf{i}}), \\
\bc_{ij}^K-\bc_{ij}^{\partial K} &\text{if }j \in \calI(\partial K^{\mathsf{i}}), \\
\bc_{ij}^{\partial K}                  &\text{if }j \in \calI(\partial K^{\mathsf{e}}).
\end{cases}
\end{equation}
The partition of unity property~\eqref{boundary_partition_unity}
implies that $\sum_{j\in\calI(i)} \bc_{ij} =\bzero$ (see for instance
\citep[Lem.~4.1]{Guermond_Popov_Tomas_2019}).

Let us introduce the discrete broken space
\begin{equation}
  \bP\upb(\calT_h,\calA_L) := \underbrace{P\upb(\calT_h)\CROSS \ldots \CROSS P\upb(\calT_h)}_{\text{$L$ times}}.
\end{equation}
Let us denote by
$\bpsi_h :=
(\psi_{h,1},\ldots,\psi_{h,L})\in\bP\upb(\calT_h,\calA_L)$, with
$ \psi_{h,k} := \sum_{j\in\calV\CROSS\calL} \Psi_{jk}\varphi_j\in
P\upb(\calT_h)$, the dG approximation of~\eqref{model_pb} using the
centered flux.  The field $\bpsi_h\in\bP\upb(\calT_h,\calA_L)$ is
defined to be the solution of
 \begin{equation}
 \sum_{j\in\calI(i){\setminus}\{i\}}\!\! \bOmega_k\SCAL\bc_{ij} (\Psi_{jk} - \Psi_{ik})
+ m_i \sigma_{t,i} \Psi_{ik} = m_i \sigma_{s,i} \oPsi_{i} + m_i q_{ik}
+b_{ik}^\partial (\alpha_{ik}^\partial-\Psi_{ik}). \label{dG_system}
\end{equation}
We insist here that we are using the centered flux; there is no
upwinding. The proper stabilization will be introduced in
\S\ref{sec:the_AP_scheme}.

\begin{remark}[Definition of $\sigma_{t,i}$ and $\sigma_{s,i}$]
  The definition of the coefficients $\sigma_{t,i}$ and $\sigma_{s,i}$
  depend on the definition of the shape functions.  If the shape
  functions are nodal-based (\ie Lagrange polynomials) then one can
  take $\sigma_{t,i} =\sigma_{t|K}(\ba_i)$, where $K$ contains the
  support of $\varphi_i$ and $\ba_i$ is the Lagrange node associated
  with $\varphi_i$, and we recall that we denote
$\sigma_{a,i}:= \sigma_{t,i}-\sigma_{s,i}$.
\end{remark}

\section{Graph viscosity, positivity, and
  locking} \label{Sec:graph_viscosity_positivity_locking}

In order to give some perspective, we start by introducing a mechanism
that ensures positivity but fails to be robust in the diffusion limit.
A correction that makes the method asymptotic-preserving in the
diffusion limit is introduced in \S\ref{sec:the_AP_scheme}.

\subsection{Positivity} \label{Sec:positivity}

Our starting point is the algebraic system \eqref{cG_system} or
\eqref{dG_system}, which we call Galerkin, or centered, or inviscid
approximation. We are not going to make any distinction between the
continuous and the discontinuous Galerkin approximations.  The
discrete space are henceforth denoted $P(\calT_h)$ and $\bP(\calT_h)$,
\ie we have removed the superscripts $\upg$ and $\upb$.  We consider
the following linear system: Find
$\bpsi_h = \sum_{i\in\calV}(\Psi_{i1},\ldots,\Psi_{iL})\varphi_i\in
\bP(\calT_h)$ so that the following holds for all $(i,k)\in\calV\CROSS\calL$:
\begin{equation}
 \sum_{j\in\calI(i){\setminus}\{i\}}\!\! \bOmega_k\SCAL\bc_{ij} (\Psi_{jk} - \Psi_{ik})
+ m_i \sigma_{t,i} \Psi_{ik} = m_i \sigma_{s,i} \oPsi_{i} + m_i q_{ik}
+b_{ik}^\partial (\alpha_{ik}^\partial-\Psi_{ik}), \label{galerkin_system}
\end{equation}
where we recall that $\sum_{j\in\calI(i)} \bc_{ij}=0$ for all
$i\in\calV$.  Taking inspiration from~\cite{Guermond_Popov_2017}, we
introduce the coefficient $d_{ij}^k$ defined by setting 
\begin{equation}
  d_{ij}^k = \max(\max(\bOmega_k\SCAL \bc_{ij},0),\max(\bOmega_k\SCAL \bc_{ji},0)). \label{def_dij}
\end{equation}
Then 
we perturb \eqref{galerkin_system} as follows:
\begin{equation}
 \sum_{j\in\calI(i){\setminus}\{i\}}\!\!\!\! (\bOmega_k\SCAL\bc_{ij} - d_{ij}^k) (\Psi_{jk} - \Psi_{ik})
+ m_i \sigma_{t,i} \Psi_{ik} = m_i \sigma_{s,i} \oPsi_{i} + m_i q_{ik}
+b_{ik}^\partial (\alpha_{ik}^\partial-\Psi_{ik}).
\label{dij_system}
\end{equation}
The extra term
$\sum_{j\in\calI(i){\setminus}\{i\}} -d_{ij}^k (\Psi_{jk} -
\Psi_{ik})$
is a graph viscosity since it acts on the connectivity graph of the
degrees of freedom. Notice that this perturbation is first-order
consistent since it vanishes if $\Psi_{jk} = \Psi_{ik}$ for all
$j\in\calI(i)$.  In one dimension on a nonuniform mesh, where the
adjacency list is $\{i-1,i,i+1\}$, we have
$d_{ij}^k=\frac{|\bOmega_k|}{2}$ both for continuous piecewise linear
finite elements and for piecewise constant discontinuous elements; as
a result, we have
$\sum_{j\in\calI(i){\setminus}\{i\}} -d_{ij}^k (\Psi_{jk} - \Psi_{ik})
= -\frac{|\bOmega_k|}{2} (\Psi_{i-1,k} - 2\Psi_{ik} + \Psi_{i+1,k})$,
which is the expression one expects from an artificial viscosity
term. Further insight on the graph viscosity is given in
Remark~\ref{Rem:dg0} in the context of the dG0 setting. The following
result is the key motivation for introducing the graph viscosity.

\begin{lemma}[Minimum/Maximum principle] \label{Lem:min_principle} Let
  $d_{ij}^k$ be defined in~\eqref{def_dij}. Let
  $(\Psi_{ik})_{(i,k)\in\calV\CROSS \calL}$ be the solution to
  \eqref{dij_system}. Let
  $\Psi^{\min}:=\min_{(i,k)\in \calV\CROSS\calL} \Psi_{ik}$ and
  $\Psi^{\max}:=\max_{(i,k)\in \calV\CROSS\calL} \Psi_{ik}$. Let
  $(i_0,k_0), (i_1,k_1)\in \calV\CROSS \calL$ be so that $\Psi_{i_0k_0} =\Psi^{\min}$ and
  $\Psi_{i_1k_1} = \Psi^{\max}$.
  \begin{enumerate}[(i)]
  \item \label{item1:Lem:min_principle} Assume that
    $\min_{(j,l)\in\calV\CROSS\calL} (\sigma_{a,j}+b_{jl}^\partial)>0$. Then 
    \begin{equation}
      \frac{m_{i_0} q_{i_0k_0} + b_{i_0k_0}^\partial \alpha_{i_0k_0}^\partial}{m_{i_0}
        \sigma_{a,i_0} + b_{i_0k_0}^\partial} \le \Psi^{\min}\le 
     \Psi^{\max}\le  \frac{m_{i_1} q_{i_1k_1} + b_{i_1k_1}^\partial \alpha_{i_1k_1}^\partial}{m_{i_1}
        \sigma_{a,i_1} + b_{i_1k_1}^\partial}.
      \quad 
   \end{equation}
 \item \label{item2:Lem:min_principle} Otherwise, assume that for all
   $i\in\calV$ such that $\sigma_{a,i}=0$ and $b_{ik}^\partial=0$ the
   definition of $d_{ij}^k$ is slightly modified so that
   $\bOmega_k\SCAL\bc_{ij} < d_{ij}^k$ for all $j\in\calI(i)$ (instead of $\bOmega_k\SCAL\bc_{ij} \le d_{ij}^k$). If
   $0\le \min_{(i,k)\in \calV\CROSS\calL} q_{ik}$ and
   $0\le \min_{(i,k)\in (\calV\CROSS\calL)^\partial} \alpha_{ik}^\partial$, 
   then $0\le \Psi^{\min}$.
 \item \label{item3:Lem:min_principle} Moreover, under the same
   assumptions on $d_{ij}^k$ as in \eqref{item2:Lem:min_principle}, if
   $\max_{(i,k)\in \calV\CROSS\calL} q_{ik}\le 0$, then
   $\Psi^{\max} \le \max_{(i,k)\in (\calV\CROSS\calL)^\partial}
   \alpha_{ik}^\partial$.
  \end{enumerate}
\end{lemma}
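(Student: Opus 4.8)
The plan is to argue by contradiction at the global extremizer, exploiting the discrete equation \eqref{dij_system} together with the sign structure of the off-diagonal coefficients $\bOmega_k\SCAL\bc_{ij}-d_{ij}^k$. Observe first that by the definition \eqref{def_dij} we have $d_{ij}^k\ge \max(\bOmega_k\SCAL\bc_{ij},0)\ge 0$, hence $\bOmega_k\SCAL\bc_{ij}-d_{ij}^k\le 0$ for every $j\in\calI(i){\setminus}\{i\}$; moreover $d_{ij}^k=d_{ji}^k$ is symmetric. Rewriting \eqref{dij_system} by moving the $\Psi_{ik}$-terms to the left and using $b_{ik}^\partial\ge 0$, one gets $\Psi_{ik}$ expressed as a convex-type combination: collecting the diagonal coefficient $\Theta_{ik}:=\sum_{j}(d_{ij}^k-\bOmega_k\SCAL\bc_{ij})+m_i\sigma_{t,i}+b_{ik}^\partial = \sum_j (d_{ij}^k-\bOmega_k\SCAL\bc_{ij})+m_i\sigma_{s,i}+m_i\sigma_{a,i}+b_{ik}^\partial>0$, we obtain
\begin{equation*}
\Theta_{ik}\,\Psi_{ik} = \sum_{j\in\calI(i){\setminus}\{i\}}(d_{ij}^k-\bOmega_k\SCAL\bc_{ij})\,\Psi_{jk} + m_i\sigma_{s,i}\oPsi_i + m_iq_{ik} + b_{ik}^\partial\alpha_{ik}^\partial,
\end{equation*}
where every coefficient on the right is nonnegative. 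Since $\oPsi_i=\frac{1}{\mes{\calS}}\sum_l\mu_l\Psi_{il}$ is itself a convex combination of the $\Psi_{il}$ with weights $\mu_l/\mes{\calS}\ge 0$ summing to one (using $\sum_l\mu_l=\mes{\calS}$ from \eqref{angular_quadrature}), this displays $\Psi_{ik}$ as an affine combination of other unknowns $\Psi_{jl}$ plus the source/boundary contributions.

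For part~(i): evaluate the displayed identity at $(i_0,k_0)$ where $\Psi_{i_0k_0}=\Psi^{\min}$. Replace each $\Psi_{jk_0}$ and each $\Psi_{i_0 l}$ appearing on the right by $\Psi^{\min}$ (this can only decrease the right-hand side since the weights are nonnegative); the graph-viscosity terms and the $\sigma_{s,i_0}\oPsi_{i_0}$ term then collapse, leaving
\begin{equation*}
\Theta_{i_0k_0}\Psi^{\min} \ge \Big(\sum_j(d_{i_0j}^{k_0}-\bOmega_{k_0}\SCAL\bc_{i_0j})+m_{i_0}\sigma_{s,i_0}\Big)\Psi^{\min} + m_{i_0}\sigma_{a,i_0}\Psi^{\min}\cdot 0 + \dots
\end{equation*}
—more carefully, one finds $(m_{i_0}\sigma_{a,i_0}+b_{i_0k_0}^\partial)\Psi^{\min}\le m_{i_0}q_{i_0k_0}+b_{i_0k_0}^\partial\alpha_{i_0k_0}^\partial$, and dividing by the strictly positive quantity $m_{i_0}\sigma_{a,i_0}+b_{i_0k_0}^\partial$ (here the hypothesis $\min_{(j,l)}(\sigma_{a,j}+b_{jl}^\partial)>0$ is used, noting $m_{i_0}>0$) yields the lower bound. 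The upper bound at $(i_1,k_1)$ is symmetric, using $q_{i_1k_1}$, $\alpha_{i_1k_1}^\partial$ and reversing the inequalities.

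For parts~(ii) and~(iii): now the denominator may vanish, so the contradiction argument must be propagated through the graph. Suppose $\Psi^{\min}<0$ and let $(i_0,k_0)$ realize it. If $m_{i_0}\sigma_{a,i_0}+b_{i_0k_0}^\partial>0$, the argument of part~(i) gives $\Psi^{\min}\ge (m_{i_0}q_{i_0k_0}+b_{i_0k_0}^\partial\alpha_{i_0k_0}^\partial)/(\cdots)\ge 0$ from the sign hypotheses $q_{ik}\ge 0$, $\alpha_{ik}^\partial\ge 0$, a contradiction. Otherwise $\sigma_{a,i_0}=0$ and $b_{i_0k_0}^\partial=0$; then the modified definition of $d_{ij}^k$ (strict inequality $\bOmega_{k_0}\SCAL\bc_{i_0j}<d_{i_0j}^{k_0}$) forces every weight $d_{i_0j}^{k_0}-\bOmega_{k_0}\SCAL\bc_{i_0j}$ to be \emph{strictly} positive, and the identity forces each $\Psi_{jk_0}$ ($j\in\calI(i_0)$) as well as each $\Psi_{i_0 l}$ with $\mu_l>0$ — wait, more precisely each $\Psi_{jl}$ entering with positive weight — to equal $\Psi^{\min}$ too; in particular all spatial neighbors $j\in\calI(i_0)$ in direction $k_0$ are again minimizers. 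Iterating this propagation over the connectivity graph (which is connected because $\Dom$ is connected and the mesh is matching) one reaches either an interior node with $\sigma_{a,i}+b_{ik}^\partial>0$ — contradiction as above — or a boundary degree of freedom $(j,l)\in(\calV\CROSS\calL)^\partial$ where $b_{jl}^\partial>0$ and the bound $\Psi^{\min}\ge \alpha_{jl}^\partial\ge 0$ again contradicts $\Psi^{\min}<0$. Hence $\Psi^{\min}\ge 0$. Part~(iii) is the mirror image: assume $\Psi^{\max}>\max_{(j,l)\in(\calV\CROSS\calL)^\partial}\alpha_{jl}^\partial$, run the same propagation using $q_{ik}\le 0$, and derive a contradiction at the boundary.

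The main obstacle is the graph-propagation step in (ii)–(iii): one must check that the strict-inequality modification of $d_{ij}^k$ is only needed at nodes with $\sigma_{a,i}=b_{ik}^\partial=0$, that the propagation stays within the fixed direction $k_0$ (so the scattering term, which couples directions, is handled by the separate convex-combination bound $\oPsi_{i_0}\ge\Psi^{\min}$ rather than by propagation), and that every maximal chain of such degenerate nodes must eventually hit either an absorbing node or an inflow boundary node — which is where connectivity of $\Dom$ and the structure of $\front_-$ enter. This also requires being slightly careful that when $\oPsi_{i_0}=\Psi^{\min}$ is forced, it forces $\Psi_{i_0 l}=\Psi^{\min}$ for \emph{all} $l$ with $\mu_l>0$, which under \eqref{angular_quadrature} is all of $\calL$; one then restarts the spatial propagation from each such $(i_0,l)$.
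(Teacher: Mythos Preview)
Your approach is essentially the paper's: evaluate \eqref{dij_system} at the extremizer, exploit $\bOmega_k\SCAL\bc_{ij}-d_{ij}^k\le 0$ together with $\oPsi_i\ge\Psi^{\min}$ to collapse the off-diagonal and scattering terms, and in the degenerate case $\sigma_{a,i_0}=b_{i_0k_0}^\partial=0$ propagate the minimizer through the connectivity graph until an inflow boundary node is reached.

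Two small corrections. First, in part~(i) the inequality you display is reversed: replacing each $\Psi_{jk_0}$ and $\oPsi_{i_0}$ by $\Psi^{\min}$ \emph{lowers} the right-hand side of your identity, so you obtain $(m_{i_0}\sigma_{a,i_0}+b_{i_0k_0}^\partial)\Psi^{\min}\ge m_{i_0}q_{i_0k_0}+b_{i_0k_0}^\partial\alpha_{i_0k_0}^\partial$, not $\le$; the stated lower bound then follows. Second, the angular propagation you discuss in the last paragraph is unnecessary. The paper uses the scattering term only through the one-sided bound $\oPsi_{i_0}\ge\Psi^{\min}$ and never forces equality there; the propagation is purely spatial in the \emph{fixed} direction $k_0$ (each $j\in\calI(i_0)$ satisfies $\Psi_{jk_0}=\Psi^{\min}$, then one restarts at $(j,k_0)$). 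This suffices because the mesh is connected and, for any fixed $\bOmega_{k_0}$, some node $j$ lies on the inflow boundary with $b_{jk_0}^\partial>0$, which terminates the argument. Your observation that $\oPsi_{i_0}=\Psi^{\min}$ is also forced when $\sigma_{s,i_0}>0$ is correct but superfluous.
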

\begin{proof}
  Proof of \eqref{item1:Lem:min_principle}.  We start by assuming that
  $\min_{j\in\calV} (\sigma_{t,j}- \sigma_{s,j})>0$. Let
  $(i_0,k_0)\in \calV\CROSS \calL$ be the indices of the degree of freedom
  where the minimum is attained; that is, $\Psi_{ik}\ge \Psi_{i_0k_0}$
  for all $(i,k)\in\calV\CROSS \calL$. Then using that
  \[
  \bOmega_k\SCAL\bc_{ij} - d_{ij}^k \le \max(\bOmega_k\SCAL\bc_{ij},0) -
  d_{ij}^k  \le 0,
 \]
together with $\Psi_{jk_0} - \Psi_{i_0k_0} \ge 0$ for all $j\in\calI(i_0)$, and
 $\Psi_{i_0k_0} \le \oPsi_{i_0} $, we infer that
\begin{align*}
   m_{i_0} \sigma_{s,i_0} &\Psi_{i_0k_0} + m_{i_0} q_{i_0k_0} + b_{i_0k_0}^\partial (\alpha_{i_0k_0}^\partial-\Psi_{i_0k_0}) \\
   &\le m_{i_0} \sigma_{s,i_0} \oPsi_{i_0} + m_{i_0} q_{i_0k_0}  
+ b_{i_0k_0}^\partial (\alpha_{i_0k_0}^\partial-\Psi_{i_0k_0}) \\
  & = \!\!\sum_{j\in\calI(i_0){\setminus}\{i_0\}}\!\!
   (\bOmega_{k_0}\SCAL\bc_{i_0j} - d_{i_0j}^{k_0}) (\Psi_{jk_0} - \Psi_{i_0k_0})
   + m_{i_0} \sigma_{t,i_0} \Psi_{i_0k_0} \le m_{i_0} \sigma_{t,i_0} \Psi_{i_0k_0}.
 \end{align*}
 Hence
 $m_{i_0} q_{i_0k_0} + b_{i_0k_0}^\partial \alpha_{i_0k_0}^\partial \le (m_{i_0}
 \sigma_{a,i_0} + b_{i_0k_0}^\partial)\Psi_{i_0k_0}$.
 The assertion follows readily since $b_{i_0k_0}^\partial\ge 0$ implies that $m_{i_0}
 \sigma_{a,i_0} + b_{i_0k_0}^\partial>0$. The proof of the other assertion, regarding $\Psi^{\max}$, is analogous.

 Proof of \eqref{item2:Lem:min_principle} assuming that
 $0\le \min_{(i,k)\in \calV\CROSS\calL} q_{ik}$ and
 $0\le \min_{(i,k)\in (\calV\CROSS\calL)^\partial}
 \alpha_{ik}^\partial$.  From part (i) we have
 $m_{i_0} q_{i_0} +b_{i_0k_0}^\partial \alpha_{i_0k_0}^\partial\le (m_i\sigma_{a,i_0} +
 b_{i_0k_0}^\partial)\Psi_{i_0k_0}$. So, we need to prove that
 $\Psi_{i_0k_0}\ge 0$ only in the case $\sigma_{a,i_0} =0 $ and
 $b_{i_0k_0}^\partial=0$. Assuming that $\sigma_{a,i_0} =0 $ and
 $b_{i_0k_0}^\partial=0$, we have from part
 \eqref{item1:Lem:min_principle} the following inequality
 \[
   0 \le m_{i_0} q_{i_0k_0} 
      \le \!\!\sum_{j\in\calI(i_0){\setminus}\{i_0\}}\!\!
   (\bOmega_{k_0}\SCAL\bc_{i_0j} - d_{i_0j}^{k_0}) (\Psi_{jk_0} - \Psi_{i_0k_0}) \le 0.
 \]
 The assumption $\bOmega_k\SCAL\bc_{i_0j} - d_{i_0j}^{k_0}<0$ for all
 $j\in\calI(i_0)$, implies that $\Psi_{jk_0} - \Psi_{i_0k_0}=0$ for
 all $j\in\calI(i_0)$. Therefore, we conclude that the global minimum
 is attained not only at the degree of freedom $(i_0,k_0)$ but also in
 the whole neighborhood, \ie for all $j\in\calI(i_0)$. Repeating the
 above argument for a global minimum at $(j, k_0)$ for all
 $j\in\calI(i_0)$, we derive that the global minimum is either
 nonnegative (if $m_{j} \sigma_{a,j} + b_{jk_0}^\partial>0$) or again
 attained in the whole neighborhood of $j$, \ie for all
 $s\in\calI(j)$. This process can terminate in two ways: (i) either
 the global minimum is nonnegative at some $j$, \ie
 $m_{j} \sigma_{a,j} + b_{jk_0}^\partial>0$; 
 (ii) or the global
 minimum is attained at all of the degrees of freedom topologically
 connected to $i_0$.  In the first case we have proved the
 non-negativity, in the second case we have that
 $\Psi_{jk_0}= \Psi_{i_0k_0}$ for all $j$ in the same connected
 component as $i_0$, which is the entire set $\calV$ since $\calT_h$
 is connected (because $\Dom$ is connected).  However, for any fixed
 $k_0$ there exists $j$ such that $\Psi_{jk_0}$ is on the inflow
 boundary for $\bOmega_{k_0}$, that is $b_{jk_0}^\partial>0$, and we
 conclude $\Psi_{i_0k_0} =\Psi_{jk_0}\ge 0$.

 Proof of \eqref{item3:Lem:min_principle} assuming that
 $\min_{(i,k)\in \calV\CROSS\calL} q_{ik}\le 0$. By
 proceeding as in Step~\eqref{item1:Lem:min_principle}, we infer that
 \begin{align*}
   m_{i_1} \sigma_{s,i_1} &\Psi_{i_1k_1} + m_{i_1} q_{i_1k_1} + b_{i_1k_1}^\partial (\alpha_{i_1k_1}^\partial-\Psi_{i_1k_1}) \\
  & \ge  \!\!\sum_{j\in\calI(i_1){\setminus}\{i_1\}}\!\!
   (\bOmega_{k_1}\SCAL\bc_{i_1j} - d_{i_1j}^{k_1}) (\Psi_{jk_1} - \Psi_{i_1k_1})
   + m_{i_1} \sigma_{t,i_1} \Psi_{i_1k_1} ,
 \end{align*}
 \ie
 $(m_{i_1} \sigma_{a,i_1} + b_{i_1k_1}^\partial )\Psi_{i_1k_1} \le
 m_{i_1} q_{i_1k_1} + b_{i_1k_1}^\partial \alpha_{i_1k_1}^\partial$;
 which implies $\Psi_{i_1k_1} \le \alpha_{i_1k_1}^\partial$ if $b_{i_1k_1}^\partial>0$. 
 Hence we just
 need to consider the case $b_{i_1k_1}^\partial=0$. In that case
 $0 \ge \!\!\sum_{j\in\calI(i_1){\setminus}\{i_1\}}
 (\bOmega_{k_1}\SCAL\bc_{i_1j} - d_{i_1j}^{k_1}) (\Psi_{jk_1} -
 \Psi_{i_1k_1}) \ge 0$ and $\Psi_{jk_1} =\Psi_{i_1k_1}$ for all
 $j\in \calI(i_1)$. Then we proceed as in
 Step~\eqref{item2:Lem:min_principle} until we reach a dof $j$ that is on
 the inflow boundary for $\bOmega_{k_1}$, \ie $b_{jk_1}^\partial>0$.
 The $\Psi^{\max} = \Psi_{i_1k_1} = \Psi_{jk_1} \le \alpha_{j,k_1}^\partial$.
\end{proof}

\begin{remark}[dG0] \label{Rem:dg0} To give some insight
  about~\eqref{def_dij} to the reader who is familiar with the dG
  formulation of the radiation transport equation, we now interpret
  the graph viscosity in terms of numerical flux. Assume that
  $P\upb(\calT_h)$ is composed of piecewise constant polynomials. In
  this case the indices $i\in\calV$ coincide with the enumeration of
  the cells in $\calT_h$. Let $K_i\in\calT_h$ be a cell and let
  $(K_j)_{j\in\calI(i)}$ be all the cells that share a face with
  $K_i$, then recalling~\eqref{def:cijdG}, we have
  $\bc_{ii}=\int_{K_i} \varphi_i \GRAD \varphi_i \diff \bx - \frac12
  \int_{\partial K_i} \varphi_i^2 \bn_K \diff s$
  and
  $\bc_{ij} = \frac12 \int_{\partial K_i} \varphi_i\varphi_j \bn_K
  \diff s$
  for all $j\in\calI(i){\setminus}\{i\}$.  Let us set
  $\psi_{h,k}(\bx) = \sum_{j\in\calV} \Psi_{jk} \varphi_j \in
  P\upb(\calT_h)$.
  Let us denote $\psi_{h,k}\upe$ and $\psi_{h,k}\upi$, respectively,
  the exterior trace and the interior trace of $\psi_{h,k}$ on
  $\partial K_i$. Then
  \begin{align*}
    \sum_{j\in\calI(i){\setminus}\{i\}}
& ( \bOmega_k\SCAL\bc_{ij} -d_{ij}^k) ( \Psi_{jk} - \Psi_{ik}) =
\int_{K_i}   \varphi_i\bOmega_k\SCAL\GRAD \psi_{h,k}(\bx) \diff \bx \\
 &+ \int_{\partial K_i} \frac12 (\psi_{h,k}\upe-\psi_{h,k}\upi) \varphi_i \bOmega_k\SCAL\bn_K \diff s
   - \int_{\partial K_i} \frac12 (\psi_{h,k}\upe-\psi_{h,k}\upi) \varphi_i |\bOmega_k\SCAL\bn_K| \diff s   \\
 & = -\int_{K_i}   \psi_{h,k}(\bx)\bOmega_k\SCAL\GRAD\varphi_i  \diff \bx \\
 &+ \int_{\partial K_i} \Big(\frac12 (\psi_{h,k}\upe+\psi_{h,k}\upi)  \bOmega_k\SCAL\bn_K 
 + \frac12 (\psi_{h,k}\upi-\psi_{h,k}\upe) \varphi_i |\bOmega_k\SCAL\bn_K|\Big)\varphi_i  \diff s   
  \end{align*}
  Hence the dG numerical flux is
  $\frac12 (\psi_{h,k}\upe+\psi_{h,k}\upi) \bOmega_k\SCAL\bn_K +
  \frac12 (\psi_{h,k}\upi-\psi_{h,k}\upe) \varphi_i
  |\bOmega_k\SCAL\bn_K|$, and we recognize the standard upwind flux.
  In conclusion, in the dG0 context, the system~\eqref{dij_system}
  with $d_{ij}^k$ defined in~\eqref{def_dij} simply corresponds to the
  standard upwinding approximation.
\end{remark}

\subsection{Locking} \label{Sec:locking} 
Unfortunately, as reported numerous times in the literature, just
enforcing positivity in a scheme does not prevent locking.  Actually
the approximation~\eqref{dij_system} with~\eqref{def_dij} locks in the
diffusive regime.  More precisely, let $\epsilon>0$ and let us
consider the following rescaled version of the
problem~\eqref{model_pb}:
\begin{subequations}\label{model_pb_epsilon} \begin{align}
\bOmega\ADV\psi^\epsilon(\bx,\bOmega) 
+ \frac{\sigma_t(\bx)}{\epsilon}\psi^\epsilon(\bx,\bOmega)
  &= \frac{\sigma_s(\bx)}{\epsilon}\opsi^\epsilon(\bx) 
+\epsilon q(\bx,\bOmega),&&  (\bx,\bOmega)\in \Dom\CROSS\calS,
                      \\
\psi^\epsilon(\bx,\bOmega)&= \alpha(\bx,\bOmega),  && (\bx,\bOmega)\in \front_-,
\end{align}
\end{subequations}  
with the additional assumption that
$\frac{\sigma_t(\bx)-\sigma_s(\bx)}{\epsilon} = \epsilon
\sigma_a(\bx)$.
Let $\bpsi_h^\epsilon$ be the discrete ordinate approximation to the
solution of~\eqref{model_pb_epsilon} with $d_{ij}^k$ defined in~\eqref{def_dij}:
\begin{multline}
  \sum_{j\in\calI(i){\setminus}\{i\}} (\bOmega_k\SCAL\bc_{ij} -d_{ij}^k) (\Psi_{jk}^\epsilon - \Psi_{ik}^\epsilon)
  + \epsilon m_i \sigma_{a,i}\Psi_{ik}^\epsilon \\
  = m_i \frac{\sigma_{s,i}}{\epsilon}(\oPsi_{i}^\epsilon - \Psi_{ik}^\epsilon) + m_i \epsilon q_{ik}
+ b_{ik}^\partial (\alpha_{ik}^\partial -\Psi_{ik}).
 \label{dij_system_epsilon}
\end{multline}

\begin{proposition}[Locking] 
  Let the graph viscosity $d_{ij}^k$ be defined in~\eqref{def_dij}.
  Assume that $\min_{i,j} \sum_{k\in\calL}\mu_kd_{ij}^k>0$. If the
  boundary conditions are homogeneous, \ie $\alpha_{ik}^\partial=0$,
  then
  $\lim_{\epsilon\to 0} (\Psi_{jk}^\epsilon - \oPsi_{i}^\epsilon)=0$
  for all $i,j\in\calV$ and all $k\in\calL$.
 \end{proposition}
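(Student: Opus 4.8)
The plan is to test the scheme against its own solution so as to produce an energy identity in which every dissipative mechanism appears as a nonnegative quadratic form, and then to pass to the limit $\epsilon\to0$ in that identity. As a preliminary step I would check that the family $(\Psi_{ik}^\epsilon)_{(i,k)\in\calV\CROSS\calL}$ is bounded uniformly in $\epsilon$: system~\eqref{dij_system_epsilon} has the structure of~\eqref{dij_system} with effective absorption coefficient $\epsilon m_i\sigma_{a,i}$, effective scattering coefficient $\epsilon^{-1}m_i\sigma_{s,i}$, source $\epsilon m_i q_{ik}$, and vanishing inflow data, so the discrete maximum principle of Lemma~\ref{Lem:min_principle} yields $|\Psi_{ik}^\epsilon|\le \max_{j,l}|q_{jl}|/\min_j\sigma_{a,j}$, a bound independent of $\epsilon$ (if $\sigma_a$ vanishes at some nodes the same bound is obtained by the connectivity argument used in that lemma). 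In particular $|\sum_{i,k}\mu_k m_i q_{ik}\Psi_{ik}^\epsilon|\le C$ with $C$ independent of $\epsilon$.

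The core of the argument is the energy identity obtained by multiplying the $(i,k)$ equation of~\eqref{dij_system_epsilon} by $\mu_k\Psi_{ik}^\epsilon$ and summing over $(i,k)\in\calV\CROSS\calL$. Using $\oPsi_i^\epsilon=\mes{\calS}^{-1}\sum_k\mu_k\Psi_{ik}^\epsilon$ and $\sum_k\mu_k=\mes{\calS}$, the scattering contribution equals $-\epsilon^{-1}\sum_i m_i\sigma_{s,i}\sum_k\mu_k(\Psi_{ik}^\epsilon-\oPsi_i^\epsilon)^2$. For the advection-plus-graph-viscosity term I would use the algebraic identity $a(b-a)=\tfrac{1}{2}(b^2-a^2)-\tfrac{1}{2}(b-a)^2$ and the relabelling $i\leftrightarrow j$; invoking $d_{ij}^k=d_{ji}^k$, $d_{ij}^k\ge\bOmega_k\SCAL\bc_{ij}$ and $\sum_{j}\bc_{ij}=\bzero$, this term splits into the nonnegative graph dissipation
\[
\Theta^\epsilon:=\tfrac{1}{2}\sum_{i,k}\mu_k\sum_{j\in\calI(i)}(d_{ij}^k-\bOmega_k\SCAL\bc_{ij})(\Psi_{jk}^\epsilon-\Psi_{ik}^\epsilon)^2\ge0
\]
plus the boundary remainder $\tfrac{1}{2}\sum_{i,k}\mu_k(\Psi_{ik}^\epsilon)^2\,\bOmega_k\SCAL\big(\sum_{j}\bc_{ji}\big)$; denoting by $\Theta_\partial^\epsilon$ the sum of this remainder and the penalty term $\sum_{i,k}\mu_k b_{ik}^\partial(\Psi_{ik}^\epsilon)^2$ coming from the right-hand side, the definition of $b_{ik}^\partial$ shows that $\Theta_\partial^\epsilon\ge0$ (up to the boundary quadrature it equals $\tfrac{1}{2}\sum_{i,k}\mu_k m_i^\partial|\bOmega_k\SCAL\bn_i|(\Psi_{ik}^\epsilon)^2$). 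Hence one obtains the identity $\epsilon^{-1}\sum_i m_i\sigma_{s,i}\sum_k\mu_k(\Psi_{ik}^\epsilon-\oPsi_i^\epsilon)^2+\Theta^\epsilon+\epsilon\sum_{i,k}\mu_k m_i\sigma_{a,i}(\Psi_{ik}^\epsilon)^2+\Theta_\partial^\epsilon=\epsilon\sum_{i,k}\mu_k m_i q_{ik}\Psi_{ik}^\epsilon$, with all four terms on the left nonnegative.

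Since the right-hand side is bounded by $C\epsilon$ by the preliminary step, each term on the left is $\le C\epsilon$; hence $\Theta^\epsilon\to0$ and $\sum_i m_i\sigma_{s,i}\sum_k\mu_k(\Psi_{ik}^\epsilon-\oPsi_i^\epsilon)^2\le C\epsilon^2\to0$. Assuming $\sigma_{s,i}>0$ at every node (the scattering-dominated regime) the second limit gives $\Psi_{ik}^\epsilon-\oPsi_i^\epsilon\to0$ for all $(i,k)\in\calV\CROSS\calL$. By the uniform bound, along any sequence $\epsilon\to0$ a subsequence of $(\Psi_{ik}^\epsilon)$ converges to a limit $(\Psi_{ik}^\star)$, which is then isotropic, $\Psi_{ik}^\star=\oPsi_i^\star$ for all $i,k$. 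Keeping in $\Theta^\epsilon\to0$ only the summand attached to a fixed adjacent pair $j\in\calI(i)$ and a fixed $k$ forces $(d_{ij}^k-\bOmega_k\SCAL\bc_{ij})(\Psi_{jk}^\star-\Psi_{ik}^\star)^2=0$ for every such $k$; summing over $k$ with weights $\mu_k$ and using $\Psi_{jk}^\star-\Psi_{ik}^\star=\oPsi_j^\star-\oPsi_i^\star$ together with $\sum_k\mu_k\bOmega_k=\bzero$ (so that $\sum_k\mu_k\bOmega_k\SCAL\bc_{ij}=0$) reduces this to $(\oPsi_j^\star-\oPsi_i^\star)^2\sum_k\mu_k d_{ij}^k=0$, whence $\oPsi_i^\star=\oPsi_j^\star$ because $\sum_k\mu_k d_{ij}^k>0$ by hypothesis. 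As $\Dom$, and therefore $\calT_h$, is connected, $\oPsi^\star$ is a constant, so $(\Psi_{ik}^\star)$ is a global constant; since every subsequential limit has this property, $\Psi_{jk}^\epsilon-\oPsi_i^\epsilon\to0$ for all $i,j\in\calV$ and all $k\in\calL$.

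The step I expect to be the main obstacle is the energy identity of the second paragraph: isolating the nonnegative graph dissipation $\Theta^\epsilon$ from the combined advection and graph-viscosity term, and, above all, verifying that the leftover boundary contribution $\Theta_\partial^\epsilon$ is nonnegative. This is where the continuous and discontinuous finite element settings require separate bookkeeping, since the boundary integral $\sum_{j}\bc_{ji}$ must be matched against the penalty $b_{ik}^\partial$; in the discontinuous case one must moreover use the piecewise definition~\eqref{def:cijdG} of $\bc_{ij}$ and the boundary partitions of unity~\eqref{boundary_partition_unity}.
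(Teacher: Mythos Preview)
Your argument is correct and takes a genuinely different route from the paper's. The paper proceeds by a formal asymptotic expansion $\bpsi_h^\epsilon=\bpsi_h^0+\epsilon\bpsi_h^1+\calO(\epsilon^2)$: the $\calO(\epsilon^{-1})$ balance yields isotropy $\Psi_{ik}^0=\oPsi_i^0$, the $\calO(1)$ balance is then \emph{angularly averaged} (eliminating $\bpsi_h^1$ via $\sum_k\mu_k\bOmega_k=\bzero$), and only at that stage is an energy argument applied, namely multiplying the resulting scalar equation by $2\oPsi_i^0$, summing, and exploiting the symmetry $\gamma_{ij}=\gamma_{ji}$ with $\gamma_{ij}:=\sum_k\mu_k d_{ij}^k>0$. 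You instead run the energy argument directly on the full $\epsilon$-dependent system and only afterwards let $\epsilon\to0$ along subsequences. What the paper buys is brevity: once the expansion is accepted, the angular average kills the transport term and the boundary bookkeeping is reduced to a single scalar identity. What your approach buys is rigor (no formal expansion to justify) and quantitative by-products, namely $\sum_{i,k}\mu_k(\Psi_{ik}^\epsilon-\oPsi_i^\epsilon)^2=\calO(\epsilon^2)$ and $\Theta^\epsilon=\calO(\epsilon)$, at the price of the boundary computation you correctly flag as the main obstacle. One small caveat on your preliminary step: when $\sigma_{a,i}=0$ at some interior nodes, Lemma~\ref{Lem:min_principle}(ii) gives $\Psi^{\min}\ge0$ but part~(iii) requires $q\le0$ to bound $\Psi^{\max}$, so the uniform upper bound does not follow directly from the connectivity argument; you would need either $\sigma_a>0$ (which the paper tacitly assumes as well) or a separate a~priori estimate, e.g.\ a discrete Poincar\'e inequality using $\Theta^\epsilon$ and $\Theta_\partial^\epsilon$.
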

\begin{proof}
  To avoid losing the reader who is not familiar with functional
  analysis techniques, we are going to proceed formally. A rigorous
  proof can be done by proceeding as in
  \cite[\S4]{Guermond_Kanschat_2010}. Using Landau's notation, let us
  introduce the formal asymptotic expansion
  $\bpsi_h^\epsilon=\bpsi_h^\epsilon+\epsilon
  \bpsi_h^\epsilon+\epsilon^2 \bpsi_h^\epsilon + \calO(\epsilon^3).$
  Inserting this expansion into \eqref{dij_system_epsilon} gives
  \begin{align*}
    0&= m_i\sigma_{s,i}(\oPsi_{i}^0 - \Psi_{ik}^0), \\
    \sum_{j\in\calI(i){\setminus}\{i\}} (\bOmega_k\SCAL\bc_{ij} -d_{ij}^k) (\Psi_{jk}^0 - \Psi_{ik}^0)
     &= m_i \sigma_{s,i}(\oPsi_{i}^1 - \Psi_{ik}^1) + b_{ik}^\partial (\alpha_{ik}^\partial-\Psi_{ik}^0).
   \end{align*}
   The first equation shows that $\oPsi_{i}^0 = \Psi_{ik}^0$. Then
   integrating the second equation with respect to the angles gives
\[
     \sum_{j\in\calI(i){\setminus}\{i\}} (\oPsi_{j}^0 - \oPsi_{i}^0)
     \bigg(\Big(\sum_{k\in\calL}\mu_k\bOmega_k)\Big)\SCAL\bc_{ij} -\sum_{k\in\calL}\mu_kd_{ij}^k\bigg)=
m_i^\partial \frac14 \frm_i^\partial  - m_i^\partial \delta_i^\partial  \oPsi_{i}^0.
   \]
   where
   $\frm_i^\partial := \frac{4}{\mes{\calS}}\sum_{k\in\calL_i^-}
   \mu_k \alpha_{ik}^\partial |\bOmega_k\SCAL\bn_i|$
   and
   $\delta_i^\partial := \frac{1}{\mes{\calS}}\sum_{k\in\calL_i^-}
   \mu_k |\bOmega_k\SCAL\bn_i|$ with $\calL_i^-:=\{k\in\calL\st \bOmega_k\SCAL\bn_i<0\}$.
   (Note that the continuous counterparts of the coefficients $\frm_i^\partial$ and
   $\delta_i^\partial $ are
   $\frac{4}{\mes{\calS}}\int_{\bOmega\SCAL\bn_i<0}
   \alpha(\ba_i,\bOmega) |\bOmega\SCAL\bn_i|\diff \bOmega$
   and
   $\frac{1}{\mes{\calS}}\int_{\bOmega\SCAL\bn_i<0}|\bOmega\SCAL\bn_i|\diff
   \bOmega=\frac14$,
   respectively.)  Setting
   $\gamma_{ij}:=\sum_{k\in\calL}\mu_kd_{ij}^k$, the assumptions on
   the angular quadrature~\eqref{angular_quadrature} imply that
   $\sum_{k\in\calL}\mu_k\bOmega_k=\bzero$; hence,
   $ \sum_{j\in\calI(i){\setminus}\{i\}} \gamma_{ij} (\oPsi_{j}^0 -
   \oPsi_{i}^0) = m_i^\partial(\frac14 \frm_i^\partial  - \delta_i^\partial  \oPsi_{i}^0)$.
Let us assume now that $\alpha_{ik}^\partial=0$, and let us
   multiplying this equation by $2 \oPsi_{i}^0$, then
\[
  \sum_{j\in\calI(i){\setminus}\{i\}} \gamma_{ij} (\oPsi_{j}^0)^2 
+ \gamma_{ij}(\oPsi_{j}^0-\oPsi_{i}^0)^2 - \gamma_{ij} ( \oPsi_{i}^0)^2= -2 m_i^\partial\delta_i^\partial  (\Psi_{i}^0)^2.
\]
Now we observe that $\gamma_{ij}=\gamma_{ji}$, and we sum the above
identity over $i\in\calV$. This yields
$\sum_{i\in\calV} \Big(2 m_i^\partial  \delta_i^\partial  (\Psi_{i}^0)^2 + \sum_{j\in\calI(i){\setminus}\{i\}}
\gamma_{ij}(\oPsi_{j}^0-\oPsi_{i}^0)^2\Big) = 0$. This in turn implies that
$\oPsi_{j}^0=\oPsi_{i}^0$ for all $i,j\in\calV$ since
$\min_{i,j} \gamma_{ij}>0$; that is, there is locking.
 \end{proof}

\section{An asymptotic preserving scheme} \label{sec:the_AP_scheme}
The goal of this section is to introduce the asymptotic preserving
method mentioned in the introduction of the paper. 
This scheme is
somewhat discretization agnostic since it is solely based on the
algebraic formulations~\eqref{cG_system} and \eqref{dG_system}.

\subsection{Preliminary notation}
In the rest of the paper we use the following notation:
\begin{subequations}%
\begin{align}%
\calL_i^-&:=\{k\in\calL\st \bOmega_k\SCAL\bn_i<0\}, &&
\delta_i^\partial := \frac{1}{\mes{\calS}}\sum_{k\in\calL_i^-}
   \mu_k |\bOmega_k\SCAL\bn_i| \\
\frm_i^\partial &:= \frac{4}{\mes{\calS}}\sum_{k\in\calL_i^-}
   \mu_k \alpha_{ik}^\partial |\bOmega_k\SCAL\bn_i|, &&
\bfrM_i^\partial := \frac{1}{\mes{\calS}}\sum_{k\in\calL_i^-}
   \mu_k \alpha_{ik}^\partial |\bOmega_k\SCAL\bn_i|\bOmega_k.
\end{align}%
\end{subequations}%
We also denote
$\calV\upint=\{i\in\calV\st \varphi_{i|\front}\equiv 0\}$ and
$\calV\upbnd=\calV{\setminus} \calV\upint$. For further reference we define
$\calL(\bx)^-:=\{k\in\calL\st \bOmega_k\SCAL\bn(\bx)<0\}$,
$\frm^\partial(\bx) := \frac{4}{\mes{\calS}}\sum_{k\in\calL(\bx)^-}
   \mu_k \alpha(\bx,\bOmega_k)^\partial |\bOmega_k\SCAL\bn(\bx)|$, and
$\bfrM^\partial(\bx) := \frac{1}{\mes{\calS}}\sum_{k\in\calL(\bx)^-}
   \mu_k \alpha(\bx,\bOmega_k)^\partial |\bOmega_k\SCAL\bn(\bx)|\bOmega_k$
for all $\bx\in\front$.

Now, depending whether one uses (or prefer using) continuous finite
elements or discontinuous finite elements, we introduce two sets of
coefficients.  In the context of continuous finite elements we set
\begin{equation}
 c_{ij}^{\textup{g,d}} =\int_{\Dom}\frac{1}{\tsigma_s(\bx)}
\GRAD\varphi_i(\bx)\SCAL\GRAD\varphi_j(\bx)\diff\bx, \qquad i,j\in\calV,
\end{equation}
where
$\tsigma_s(\bx)= \max(\sigma_s(\bx),\varepsilon
\max(\frac{1}{\diam(D)}, \|\sigma_s\|_{L^\infty(\Dom)}))$
with $\varepsilon=10^{-14}$. The quantity $\tsigma_s$ is introduced to
avoid divisions by zero.  For discontinuous finite elements of degree
$1$ or larger we proceed as follows.  We assume for simplicity that
$\tsigma_s$ is constant over each mesh cell and denote
$\tsigma_{s,K}:= \tsigma_{s|K}$ for all cells $K$. Let $K\in\calT_h$
and let $\calF_K\upint$ be the set in the faces of $K$ that are not on
$\front$; that is, $F\in\calF_K\upint$ if there exists $K'\in\calT_h$, $K'\ne K$, such that 
$F:=K\cap K'$. 
For every $F\in \calF_K\upint$,
we define $\tsigma_{s,F} =
\frac{2\tsigma_K\tsigma_{K'}}{\tsigma_K+\tsigma_{K'}}$
and $h_F:=\text{diam}(\mes{F})$.  Let $v\in\bP\upb(\calT_h)$ and let
$v_K$, $v_{K'}$ be the restrictions of $\bv$ on $K$ and $K'$
respectively; we define the weighted average of $v$ across $F\in\calF_K\upint$ as
follows:
$\avg{v}:=\frac{\tsigma_K}{\tsigma_K+\tsigma_{K'}}v_{K|F} +
\frac{\tsigma_{K'}}{\tsigma_K+\tsigma_{K'}}v_{K'|F}$.
The jump of $v$ across $F\in\calF_K\upint$ is defined by setting
$\jump{v}:=v_K-v_K'$. We now define for all $j\in\calI(i)$
\begin{multline}
 c_{ij}^{\textup{b,d}} =\int_{K}\frac{1}{\tsigma_s}
 \GRAD\varphi_i\SCAL\GRAD(\varphi_{j|K}) \diff\bx
 +\gamma \sum_{F\in\calF_K\upint} \frac{1}{\tsigma_{s,F} h_F} \int_F \jump{\varphi_i} \jump{\varphi_j} \diff s \\
 - \sum_{F\in\calF_K\upint} \int_F \left(\avg{\tfrac{1}{\tsigma_s}\GRAD\varphi_i}\SCAL \bn_{K}\jump{\varphi_j}
   + \avg{\tfrac{1}{\tsigma_s}\GRAD\varphi_j}\SCAL \bn_{K}\jump{\varphi_i}\right) \diff s,
\end{multline}
where $\gamma$ is a user-defined constant of order $1$, and with the convention that 
$\varphi_{j|K}=0$ if $j\in \calI(\partial K\upe)$. Denoting by
$c_{ij}\upd$ either $c_{ij}^{\textup{g,d}}$ or
$c_{ij}^{\textup{b,d}}$, depending on the context, and with
$v_h:=\sum_{j\in\calV}\sfV_j\varphi_j$ and
$w_h:=\sum_{j\in\calV}\sfW_j\varphi_j$, the bilinear form
$a:P(\calT_h)\CROSS P(\calT_h)\to \Real$ defined by
\begin{equation}
a(v_h,w_h) := \frac13 \sum_{i,j\in\calV} c_{ij}\upd \sfV_j \sfW_i
\end{equation}
 is the
discrete weak form of the operator $-\DIV(\frac{1}{3\sigma_s}\GRAD v)$
which naturally appears in the diffusion limit of \eqref{model_pb}.
Notice that the partition of unity property implies that
$\sum_{j\in \calI(i)} c_{ij}\upd =0$; hence, we can also write 
$a(v_h,w_h) = \frac13\sum_{i\in\calV}\sum_{j\in\calV{\setminus}\{i\}} c_{ij}\upd (\sfV_j-\sfV_i) \sfW_i$.

\subsection{Description of the method} \label{Sec:AP_cG}
To avoid repeating the same argument for continuous finite elements
and discontinuous finite elements, we denote by $c_{ij}\upd$ either
$c_{ij}^{\textup{g,d}}$ or $c_{ij}^{\textup{b,d}}$ depending on the
context.  For any pair $i,j\in\calV$ in the same stencil, say
$j\in\calI(i)$ (or equivalently $i\in\calI(j)$), we define
\begin{subequations} \label{def_coefficients_AP}
  \begin{align}
    d_{ij}^k&:=\max(\max(\bOmega_k\SCAL \bc_{ij},0),\max(\bOmega_k\SCAL \bc_{ji},0)),
              \quad \sigma_{s,ij} := \frac12 (\sigma_{s,i}+\sigma_{s,j}). \label{def_dijk_AP}\\
h_{ij} &:= 
\frac{3}{|c_{ij}\upd|}\frac{1}{\mes{\calS}}
\sum_{k\in\calL}\mu_k d_{ij}^k, 
\quad h_i := \frac{1}{\text{card}(\calI(i))-1}\sum_{j\in\calI(i){\setminus}\{i\}} h_{ij}.  \label{def_hij_AP}
\end{align}
\end{subequations} 
Notice that $d_{ij}^k=|\bc_{ij}\SCAL\bOmega_k|$ if either
$i\in\calV\upint$ or $j\in\calV\upint$ since in this case
$\bc_{ij} = -\bc_{ji}$. The stabilized formulation we consider
consists of solving the following set of linear equations:
\begin{subequations}
  \label{AP_formulation_cG_plus_BC}
\begin{align}
  \sum_{j\in\calI(i){\setminus}\{i\}}
&\frac{1}{\sigma_{s,ij}h_{ij}+1} ( \bOmega_k\SCAL\bc_{ij} -d_{ij}^k) ( \Psi_{jk} - \Psi_{ik})
  +m_i \sigma_{a,i}\Psi_{ik}   \label{AP_formulation_cG}\\
  &\qquad =  m_i q_{ik}+ \frac{m_i\sigma_{s,i}}{\sigma_{s,i}h_i+1}\left(-\Psi_{ik}+\oPsi_{i}\right)
+ \frac{1}{\sigma_{s,i}h_i+1}b_{ik}^\partial (\beta_{ik}^\partial-\Psi_{ik}). \nonumber
   \\
  \beta_{ij}^\partial 
&:= \theta_i \alpha_{ik}^\partial 
+ (1-\theta_i)(\tfrac12 \frm_i^\partial -3\bfrM_i^\partial\SCAL\bn_i),
  \qquad \theta_i :=\max(1-2\sigma_{s,i}h_i,0). 
  \end{align}%
\end{subequations}
where it is implicitly understood that $\beta_{ij}^\partial=0$ if $i\in\calV\upint$. %
\begin{remark}[Consistency]
The above formulation coincides with the centered Galerkin
approximation \eqref{galerkin_system} if $d_{ij}^k=0$.  In the general
case, \ie with $d_{ij}^k$ as defined in \eqref{def_dijk_AP}, we have
$d_{ij}^k\sim m_i h^{-1}$, where $h$ is the mesh-size; hence
$h_{ij}\sim m_i h^{-1}/(m_i h^{-2}) \sim h$ and $h_i\sim h$.  This
computation shows that both $h_{ij}$ and $h_i$ scale like the mesh-size
(at most).  Hence, \eqref{AP_formulation_cG} converges to
\eqref{dij_system} when $\sigma_s h \to 0$. In other words, the
solutions to \eqref{AP_formulation_cG} and \eqref{dij_system} are
close when the mesh-size is significantly finer than the mean free
path. The above arguments shows that \eqref{AP_formulation_cG} is a
consistent approximation of \eqref{model_pb} (the consistency error is
first-order with respect to the mesh size).
\end{remark}

\begin{remark}[Boundary conditions]
  The boundary conditions in \eqref{AP_formulation_cG_plus_BC} are
  enforced weakly.  Observe that we recover
  $\beta_{ij}^\partial\approx \alpha_{ij}^\partial$ when the boundary
  condition at the degree of freedom $i$ is isotropic, and we have
  equality $\beta_{ij}^\partial= \alpha_{ij}^\partial$ if the angular
  quadrature satisfies
  $1=\frac{4}{\mes{\calS}}\sum_{k\in\calL_i^-} \mu_k |\bOmega_k\SCAL
  \bn_i|$.
  When the boundary condition is anisotropic and when the local
  mesh-size is not small enough to resolve the mean free path, \ie
  $\sigma_{s,i} h(i)\ge 2$, we obtain
  $\beta_{ij}^\partial := \tfrac12 \frm_i^\partial
  -3\bfrM_i^\partial\SCAL \bn_i$.
  The key motivation for the proposed definition of the boundary
  condition is based on the following observation: Let
  $\psi^0:=\lim_{\epsilon\to 0} \psi^\epsilon$ where $\psi^\epsilon$
  solves the rescaled problem \eqref{model_pb_epsilon}.  Let
  $\psi_{dG,h}^\epsilon$ be the dG approximation of
  \eqref{model_pb_epsilon} with the upwind numerical flux (assuming that the
  polynomial degree is larger than or equal to $1$) and let
  $\psi_{dG}^0 := \lim_{h\to 0}\lim_{\epsilon\to 0}
  \psi_{dG,h}^{\epsilon}$;
  here the order the two limits are taken is important. Then it is
  observed in \cite[III.D]{Adams_2001} and proved in
  \cite[Th.~5.4]{Guermond_Kanschat_2010} that
  $\psi_{dG|\front}^{0}=\tfrac12 \frm^\partial -3\bfrM^\partial\SCAL
  \bn$
  (notice that all the arguments in \citep{Guermond_Kanschat_2010}
  hold true by replacing integrals over the angles by any discrete
  measure (\ie quadrature) with the properties stated in
  \eqref{angular_quadrature}). If the incoming flux at the boundary is
  such that $\tfrac12 \frm^\partial \ne 3\bfrM^\partial\SCAL \bn$, it
  is known that $\psi^0\ne \psi^0_{dG}$, but it also known
  nevertheless that $\tfrac12 \frm^\partial -3\bfrM^\partial\SCAL \bn$
  is a very good approximation of $\psi_{|\front}^0$, see \eg
  discussions in \citep[p.~318]{Adams_2001} and
  \citep[\S5.5]{Guermond_Kanschat_2010}. Moreover we have
  $\psi_{|\front}^0 = \psi_{dG,|\front}^0 = \tfrac12 \frm^\partial
  -3\bfrM^\partial\SCAL \bn = \frm^\partial$
  when $\tfrac12 \frm^\partial = 3\bfrM^\partial\SCAL \bn$ (for
  instance if the incoming flux is isotropic), see \eg
  \citep[Th.~5.3]{Guermond_Kanschat_2010}.
  \end{remark}

  \begin{remark}[Literature] Let us now show the connection
    between~\eqref{AP_formulation_cG_plus_BC} and the technique
    introduced in~\cite{Gosse_Toscani_2002}. The system solved in this
    reference is the time-dependent version of~\eqref{model_pb} in one
    space dimension with two angular directions only:
    $\rho \partial_t(u,v) +\partial_x(u,-v) + \sigma_s (u,v) =
    \sigma_s \frac12 (u+v,u+v)$.
    Using upwind finite differences (or finite volumes), the proposed
    scheme is
    $\rho \partial_t(u_i,v_i) +
    (\frac{u_i-u_{i-1}}{h},\frac{v_{i}-v_{i+1}}{h})=
    \frac{\sigma_s}{\sigma_s h +1}(v_i-u_{i-1}, u_i-v_{i+1})$;
    see Eq.~(6) therein.  After simple manipulations, we observe that
    the scheme can be recast as follows
    $\rho \partial_t(u_i,v_i) + \frac{1}{\sigma_sh +1}
    (\frac{u_i-u_{i-1}}{h},\frac{v_{i}-v_{i+1}}{h})+
    \frac{\sigma_s}{\sigma_s h +1}(u_i, v_i) =
    \frac{\sigma_s}{\sigma_s h +1}\frac12 (u_{i}+v_i,u_i+ v_{i})$.
    Hence, the trick introduced in \citep{Gosse_Toscani_2002} consists
    of multiplying both the upwind finite differences and the
    scattering terms by the coefficient $\frac{1}{\sigma_sh +1}$. This
    is exactly what is done in \eqref{AP_formulation_cG}. In our case
    the upwind finite difference is the term
    $\sum_{j\in\calI(i){\setminus}\{i\}}( \bOmega_k\SCAL\bc_{ij}
    -d_{ij}^k) ( \Psi_{jk} - \Psi_{ik})$.
    This trick is now well accepted in the finite volume literature,
    see \eg \cite[Eq.~(10)]{Buet_Cordier_2004},
    \cite[Eq.~(31)]{Buet_Despres_2006},
    \cite[Eq.~(19)]{Buet_Despres_Emmanuel_2012},
    \cite[\S2.6]{Jin_Levermore_1996}, and
    \cite[Eq. (2.4)]{Li_Wang_2017}. Notice that, in addition to our
    recasting the technique from \citep{Gosse_Toscani_2002} into a
    discretization agnostic framework, two other novelties are our
    handling of the boundary condition, which is inspired from
    \citep[III.D]{Adams_2001} and
    \citep[\S5.5]{Guermond_Kanschat_2010}, and the definitions of
    $h_{ij}$ and $h_i$; see~\eqref{def_hij_AP}.
\end{remark}

\subsection{Diffusion limit expansion}
We investigate the diffusion limit of the
formulation~\eqref{AP_formulation_cG_plus_BC} by proceeding as in
\S\ref{Sec:locking}.  We rescale the problem as in
\eqref{model_pb_epsilon} by replacing $\sigma_{s,ij}$, $\sigma_{s,i}$,
$\sigma_{a,i}$, and $q_{ik}$ by $\frac{1}{\epsilon}\sigma_{s,ij}$,
$\frac{1}{\epsilon}\sigma_{s,i}$, $\epsilon\sigma_{a,i}$, and
$\epsilon q_{ik}$, respectively. The discrete problem consists of seeking
$\bpsi_h^{\epsilon}$ such that the following holds true for all
$(i,k)\in\calV\CROSS\calL$:
\begin{multline}
  \sum_{j\in\calI(i){\setminus}\{i\}}\frac{\epsilon}{\sigma_{s,ij}h_{ij}}
\frac{1}{1+\frac{\epsilon}{\sigma_{s,ij}h_{ij}}} 
( \bOmega_k\SCAL\bc_{ij} -d_{ij}^k) ( \Psi_{jk}^\epsilon - \Psi_{ik}^\epsilon )
  +\epsilon m_i \sigma_{a,i}\Psi_{ik}^\epsilon    \\
  =  m_i q_{ik}+ \frac{m_i}{h_i}\frac{1}{1+\frac{\epsilon}{\sigma_{s,i}h_i}}
\left(-\Psi_{ik}^\epsilon +\oPsi_{i}^\epsilon \right)+  \frac{\epsilon}{\sigma_{s,i}h_{i}}
\frac{1}{1+\frac{\epsilon}{\sigma_{s,i}h_{i}}} b_{ik}^\partial (\beta_{ik}^\partial-\Psi_{ik}^\epsilon).
  \label{AP_formulation_cG_epsilon}
\end{multline}
with  $\beta_{ik}^\partial := \theta_i^\epsilon \alpha_{ik}^\partial 
+ (1-\theta_i^\epsilon)(\tfrac12 \frm_i^\partial -3\bfrM_i^\partial\SCAL\bn_i)$,
$\theta_i^\epsilon :=\max(1-2\frac{\sigma_{s,i}}{\epsilon}h_i,0)$.
\begin{theorem}[Diffusion limit] \label{Th:diffusion_limit_AP} Let
  $\bpsi_h^\epsilon$ be the solution of the linear system
  \eqref{AP_formulation_cG_epsilon}.  Assume that the mesh family
  $\famTh$ is such
$c_{ij}\upd< 0$ for all $i\in\calV, j\in\calI(i){\setminus}\{i\}$.
  Let $\bpsi_h^{0} = \lim_{\epsilon\to 0}\bpsi_h^\epsilon$.  Then
  $\bpsi_h^{0}$ is isotropic, \ie
  $\bpsi_h^{0}= (\psi^0_{h},\ldots,\psi_{h}^0)$, and for all $i\in\calV$ the scalar field
  $\psi_h^{0}:=\sum_{j\in \calV} \Psi^0_j \varphi_j$ solves 
\begin{equation}
\frac{m_i^\partial}{\sigma_{s,i}h_{i}}\delta_i^\partial\oPsi_{i}^0 
+ a(\psi_h^0,\varphi_i)
  + m_i \sigma_{a,i}\oPsi_{i}^0  
  =   m_i \oq_i+ \frac{m_i^\partial}{\sigma_{s,i}h_{i}}\delta_i^\partial \Big(\frac{\frm_i^\partial}{2}
-3 \bfrM_i^\partial\SCAL\bn_i\Big).
\label{Eq1:Th:diffusion_limit_AP}
\end{equation}
Moreover, setting
$\bJ_i^\epsilon := \frac{1}{\epsilon\mes{\calS}} \sum_{k\in\calL}
\mu_k \bOmega_k \Psi_{ik}^\epsilon$,
and $\bJ_i^0:=\lim_{\epsilon\to 0}\bJ_i^\epsilon $, the vector
$\bJ_i^\epsilon$ satisfies the following consistent approximation of
Fick's law for all $i\in\calV\upint$:
\begin{equation}
  m_i \bJ_i^0 = -\sum_{j\in\calI(i){\setminus}\{i\}} \frac{h_i}{h_{ij}} 
\frac{1}{3 \sigma_{s,ij}} \bc_{ij}(\Psi^0_j-\Psi^0_i).
  \label{Eq2:Th:diffusion_limit_AP}
\end{equation}
\end{theorem}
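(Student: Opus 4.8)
The plan is to carry out a formal Hilbert (two‑scale) expansion of the solution of \eqref{AP_formulation_cG_epsilon}, in the spirit of the locking analysis of \S\ref{Sec:locking}; turning this into a rigorous statement (well‑posedness of the limit problem and $\calO(\epsilon)$‑closeness of $\bpsi_h^\epsilon$ to its limit) can be done along the lines of \cite[\S4]{Guermond_Kanschat_2010}, and I would not reproduce that here. I would write $\Psi_{ik}^\epsilon = \Psi_{ik}^0 + \epsilon\Psi_{ik}^1 + \epsilon^2\Psi_{ik}^2 + \calO(\epsilon^3)$ and $\oPsi_i^\epsilon = \oPsi_i^0 + \epsilon\oPsi_i^1 + \calO(\epsilon^2)$, with $\oPsi_i^n := \frac{1}{\mes{\calS}}\sum_{k\in\calL}\mu_k\Psi_{ik}^n$. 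Before inserting these into \eqref{AP_formulation_cG_epsilon} I record two facts: since $\sigma_{s,i}h_i>0$, one has $\theta_i^\epsilon = 0$ for $\epsilon$ small enough, so $\beta_{ik}^\partial = \tfrac12\frm_i^\partial - 3\bfrM_i^\partial\SCAL\bn_i$ independently of $\epsilon$ and of $k$ in the relevant range; and $\frac{\epsilon/(\sigma_{s,ij}h_{ij})}{1+\epsilon/(\sigma_{s,ij}h_{ij})} = \frac{\epsilon}{\sigma_{s,ij}h_{ij}} + \calO(\epsilon^2)$, $\frac{1}{1+\epsilon/(\sigma_{s,i}h_i)} = 1 + \calO(\epsilon)$.

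Identifying powers of $\epsilon$ is then the whole game. At order $\epsilon^0$ every term of \eqref{AP_formulation_cG_epsilon} is $\calO(\epsilon)$ except $\tfrac{m_i}{h_i}(\oPsi_i^0 - \Psi_{ik}^0)$; hence $\Psi_{ik}^0 = \oPsi_i^0$ for all $(i,k)\in\calV\CROSS\calL$, \ie $\bpsi_h^0$ is isotropic, and I set $\psi_h^0 := \sum_{j\in\calV}\Psi^0_j\varphi_j$ with $\Psi^0_j := \oPsi_j^0$. At order $\epsilon^1$, using $\Psi_{jk}^0 = \oPsi_j^0$, one is left with, for every $(i,k)$,
\[
\sum_{j\in\calI(i){\setminus}\{i\}}\tfrac{1}{\sigma_{s,ij}h_{ij}}(\bOmega_k\SCAL\bc_{ij} - d_{ij}^k)(\oPsi_j^0 - \oPsi_i^0) + m_i\sigma_{a,i}\oPsi_i^0 = m_iq_{ik} + \tfrac{m_i}{h_i}(\oPsi_i^1 - \Psi_{ik}^1) + \tfrac{b_{ik}^\partial}{\sigma_{s,i}h_i}\big(\tfrac12\frm_i^\partial - 3\bfrM_i^\partial\SCAL\bn_i - \oPsi_i^0\big),
\]
the boundary term being present only when $i\in\calV\upbnd$. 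To get \eqref{Eq1:Th:diffusion_limit_AP} I apply the zeroth angular moment $\frac{1}{\mes{\calS}}\sum_{k\in\calL}\mu_k(\cdot)$ to this identity. The crucial cancellation is $\frac{1}{\mes{\calS}}\sum_k\mu_k(\oPsi_i^1 - \Psi_{ik}^1) = 0$, which eliminates the unknown first‑order isotropic correction and closes the equation for $\oPsi_i^0$. In the viscous sum, $\sum_k\mu_k\bOmega_k = \bzero$ kills the $\bOmega_k\SCAL\bc_{ij}$ part, while the choice \eqref{def_hij_AP} of $h_{ij}$ together with the hypothesis $c_{ij}\upd < 0$ (so that $|c_{ij}\upd| = -c_{ij}\upd$) makes $-\frac{1}{\sigma_{s,ij}h_{ij}}\frac{1}{\mes{\calS}}\sum_k\mu_kd_{ij}^k$ equal to $\tfrac13 c_{ij}\upd$; summing over $j$ reconstructs $a(\psi_h^0,\varphi_i)$. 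The absorption term gives $m_i\sigma_{a,i}\oPsi_i^0$, the source gives $m_i\oq_i$, and — using $b_{ik}^\partial = m_i^\partial|\bOmega_k\SCAL\bn_i|$ on $\calL_i^-$ and $b_{ik}^\partial = 0$ otherwise — the boundary term gives $\tfrac{m_i^\partial\delta_i^\partial}{\sigma_{s,i}h_i}\big(\tfrac12\frm_i^\partial - 3\bfrM_i^\partial\SCAL\bn_i - \oPsi_i^0\big)$, which vanishes for $i\in\calV\upint$ since then $m_i^\partial = 0$. Moving the $\oPsi_i^0$ part of the boundary term to the left then yields exactly \eqref{Eq1:Th:diffusion_limit_AP}.

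For Fick's law I fix $i\in\calV\upint$, so $b_{ik}^\partial = 0$ and, as noted after \eqref{def_hij_AP}, $\bc_{ij} = -\bc_{ji}$ and hence $d_{ij}^k = |\bc_{ij}\SCAL\bOmega_k|$ for all $j\in\calI(i){\setminus}\{i\}$. Since $\frac{1}{\mes{\calS}}\sum_k\mu_k\bOmega_k\Psi_{ik}^0 = \oPsi_i^0\frac{1}{\mes{\calS}}\sum_k\mu_k\bOmega_k = \bzero$, I have $\bJ_i^0 = \frac{1}{\mes{\calS}}\sum_k\mu_k\bOmega_k\Psi_{ik}^1 = \frac{1}{\mes{\calS}}\sum_k\mu_k\bOmega_k(\Psi_{ik}^1 - \oPsi_i^1)$; so I solve the order‑$\epsilon^1$ identity for $\tfrac{m_i}{h_i}(\Psi_{ik}^1 - \oPsi_i^1)$, multiply by $\mu_k\bOmega_k$, and sum over $k\in\calL$. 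The $m_i\sigma_{a,i}\oPsi_i^0$ term (and, for an isotropic source, the source term) drops out because $\sum_k\mu_k\bOmega_k = \bzero$; in the viscous term one uses $\frac{1}{\mes{\calS}}\sum_k\mu_k\bOmega_k(\bOmega_k\SCAL\bc_{ij}) = \tfrac13\bc_{ij}$ from the last identity of \eqref{angular_quadrature}, and $\frac{1}{\mes{\calS}}\sum_k\mu_k\bOmega_k|\bc_{ij}\SCAL\bOmega_k| = \bzero$ from the symmetry of the quadrature (cf. \eqref{angular_quadrature}). Collecting terms gives $\tfrac{m_i}{h_i}\bJ_i^0 = -\sum_{j\in\calI(i){\setminus}\{i\}}\tfrac{1}{3\sigma_{s,ij}h_{ij}}\bc_{ij}(\Psi^0_j - \Psi^0_i)$, which is \eqref{Eq2:Th:diffusion_limit_AP} after multiplying by $h_i$; that its right‑hand side is a first‑order consistent approximation of $-\frac{1}{3\sigma_s}\GRAD\psi^0$ is the standard interpretation of the graph term $\tfrac{1}{3\sigma_{s,ij}}\bc_{ij}(\Psi^0_j - \Psi^0_i)$.

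The step I expect to be the main obstacle — and the only place where the hypothesis $c_{ij}\upd < 0$ for $j\in\calI(i){\setminus}\{i\}$ is genuinely used — is verifying the identity $-\frac{1}{\sigma_{s,ij}h_{ij}}\frac{1}{\mes{\calS}}\sum_k\mu_kd_{ij}^k = \tfrac13 c_{ij}\upd$ underpinning the reconstruction of $a(\cdot,\cdot)$ in the diffusion step: this is exactly what the definition \eqref{def_hij_AP} of $h_{ij}$ is engineered to give, but the sign bookkeeping must be done carefully so that $a(\cdot,\cdot)$ comes out with the correct (positive‑definite) sign. Everything else is routine algebra with the quadrature identities of \eqref{angular_quadrature}, and the upgrade of the formal expansion to a rigorous convergence statement follows \cite[\S4]{Guermond_Kanschat_2010}.
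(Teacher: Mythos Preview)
Your proposal is correct and follows essentially the same route as the paper's own proof: a formal Hilbert expansion inserted into \eqref{AP_formulation_cG_epsilon}, the observation that $\theta_i^\epsilon=0$ for small $\epsilon$, isotropy at order $\epsilon^0$, then zeroth and first angular moments of the order-$\epsilon^1$ relation to recover \eqref{Eq1:Th:diffusion_limit_AP} and \eqref{Eq2:Th:diffusion_limit_AP}, with the definition \eqref{def_hij_AP} of $h_{ij}$ and the sign assumption on $c_{ij}\upd$ used exactly where you indicate. Your identification of the key step and the quadrature identities invoked matches the paper, and your parenthetical remark about the source term in the Fick's law computation is, if anything, slightly more careful than the paper's own presentation.
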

\begin{proof}%
  A rigorous functional analytic argument can be made by proceeding as
  in \citep[\S4]{Guermond_Kanschat_2010}, but since the mesh-size is
  fixed and the approximation space is finite-dimensional, there is no
  fundamental obstacle to proceed formally; hence, we consider the
  asymptotic expansion
  $\bpsi_h=\bpsi_h^0+\epsilon\bpsi_h^1+\epsilon^2\bpsi_h^2+\calO(\epsilon^3)$.

  Proof of~\eqref{Eq1:Th:diffusion_limit_AP}. Notice first that
  $\theta_i^\epsilon=0$ for all $\epsilon \le 2\sigma_{s,i} h_i$;
  hence,
  $\beta_{ik}^\partial =\beta_{i}^\partial := \tfrac12 \frm_i^\partial
  -3\bfrM_i^\partial\SCAL\bn_i$.
  Using that
  $\frac{1}{1+\frac{\epsilon}{\sigma h}} = 1 - \frac{\epsilon}{\sigma
    h} + \calO(\epsilon^2)$, we have
\begin{multline*}
  \sum_{j\in\calI(i){\setminus}\{i\}}\frac{\epsilon}{\sigma_{s,ij}h_{ij}}
( \bOmega_k\SCAL\bc_{ij} -d_{ij}^k) ( \Psi_{jk} - \Psi_{ik})
  +\epsilon m_i \sigma_{a,i}\Psi_{ik}   \\
  =  \epsilon m_i q_{ik}+ \frac{m_i}{h_i}(1-\frac{\epsilon}{\sigma_{s,i}h_i})\left(-\Psi_{ik}+\oPsi_{i}\right) 
+\frac{\epsilon}{\sigma_{s,i}h_{i}}b_{ik}^\partial (\beta_{i}^\partial-\Psi_{ik})
+ \calO(\epsilon^2).
\end{multline*}
Inserting now the formal asymptotic expansion
$\bpsi_h=\bpsi_h^0+\epsilon\bpsi_h^1+\calO(\epsilon^2)$ into this equation,
we infer that $\oPsi_{i}^0 - \Psi_{ik}^0 =0$ for all $(i,k)\in\calV\CROSS\calL$ and 
\begin{multline}
\sum_{j\in\calI(i){\setminus}\{i\}}\frac{1}{\sigma_{s,ij}h_{ij}}
( \bOmega_k\SCAL\bc_{ij} -d_{ij}^k) ( \Psi_{jk}^0 - \Psi_{ik}^0)
  + m_i \sigma_{a,i}\Psi_{ik}^0 \\ 
  =   m_i q_{ik}+ \frac{m_i}{h_i} \left(-\Psi_{ik}^1+\oPsi_{i}^1\right)
+\frac{1}{\sigma_{s,i}h_{i}} b_{ik}^\partial (\beta_{i}^\partial-\Psi_{ik}^0).
  \label{corr1:Th:diffusion_limit_AP}
\end{multline}
Taking the (weighted) average of the second equation over the
discrete ordinates, we obtain
\[
\sum_{j\in\calI(i){\setminus}\{i\}}( \oPsi_{j}^0 - \oPsi_{i}^0)\frac{1}{\sigma_{s,ij}h_{ij}}
 \frac{1}{\mes{\calS}} \sum_{k\in\calL} -\mu_k d_{ij}^k
  + m_i \sigma_{a,i}\oPsi_{i}^0  
  =   m_i \oq_{i} +  \frac{m_i^\partial}{\sigma_{s,i}h_{i}}(\delta_i^\partial\beta_i^\partial - \delta_i^\partial\oPsi_{i}^0).
\]
(Recall that $\delta_i^\partial \approx \frac14$).  Now we use the
definition of $h_{ij}$ (see \eqref{def_hij_AP}) and recall that the
mesh family $\famTh$ is assumed to be such that
$c_{ij}\upd < 0$ for all $i\in\calV$, $j\in\calI(i){\setminus}\{i\}$; then we obtain
\[
  \frac{m_i^\partial}{\sigma_{s,i}h_{i}}\delta_i^\partial\oPsi_{i}^0
  + \!\! \sum_{j\in\calI(i){\setminus}\{i\}}\!\!\!
 \frac13 c_{ij}\upd ( \oPsi_{j}^0 - \oPsi_{i}^0) 
  + m_i \sigma_{a,i}\oPsi_{i}^0  
  =   m_i \oq_{i}+ \frac{m_i^\partial}{\sigma_{s,i}h_{i}}\delta_i^\partial \beta_i^\partial.
\]
Now using the partition of unity property, \ie
$\sum_{j\in\calI(i)}c_{ij}\upd =0$,
and recalling the definition of $\beta_i^\partial$, we infer that
\[
\frac{m_i^\partial}{\sigma_{s,i}h_{i}}\delta_i^\partial\oPsi_{i}^0 
+ a(\GRAD\psi_h^0,\varphi_i)
  + m_i \sigma_{a,i}\oPsi_{i}^0  
  =   m_i \oq_i+ \frac{m_i^\partial}{\sigma_{s,i}h_{i}}\delta_i^\partial (\frac12 \frm_i^\partial
-3 \bfrM_i^\partial\SCAL\bn_i).
\]

Proof of~\eqref{Eq2:Th:diffusion_limit_AP}. Since $\bpsi_h^0$ is isotropic, we have
\[
  \bJ_i^\epsilon := \frac{1}{\epsilon\mes{\calS}} \sum_{k\in\calL}
  \mu_k \bOmega_k \Psi_{ik}^\epsilon = \frac{1}{\mes{\calS}} \sum_{k\in\calL}
  \mu_k \bOmega_k \Psi_{ik}^1 + \calO(\epsilon).
\]
That is,
$\bJ_i^0:=\lim_{\epsilon\to 0}\bJ_i^\epsilon = \frac{1}{\mes{\calS}}
\sum_{k\in\calL} \mu_k \bOmega_k \Psi_{ik}^1$.
We now multiply \eqref{corr1:Th:diffusion_limit_AP} by $\bOmega_k$,
take the (weighted) average over the discrete ordinates, and recall that the
angular quadrature satisfies
$\sum_{k\in\calL}\mu_k \bOmega_k |\bn \SCAL \bOmega_k|=\bzero$ for all
$\bn\in \Real^3$,
\begin{multline*}
\sum_{j\in\calI(i){\setminus}\{i\}}\!\!\frac{1}{3\sigma_{s,ij}h_{ij}}
 \bc_{ij} ( \Psi_{j}^0 - \Psi_{i}^0)
-\sum_{j\in\calI(i){\setminus}\{i\}}\!\!\frac{( \Psi_{j}^0 - \Psi_{i}^0)}{\sigma_{s,ij}h_{ij}} 
\sum_{k\in\calL}\frac{\mu_k}{\mes{\calS}} \bOmega_k d_{ij}^k \\
=   -\frac{m_i}{h_i} \bJ_i^0 + \frac{m_i^\partial}{\sigma_{s,i}h_{i}}\frac16 (\beta_i^\partial
- \Psi_i^0)\bn_i,
\end{multline*}
where we used that
$\frac{1}{\mes{\calS}}\sum_{k\in \calL_i^{-}} |\bOmega_k\SCAL \bc|
\bOmega_k = \frac16 \bc$
for any $\bc\in \Real^3$.  If $i\in\calV\upint$, then
$d_{ij}^k = |\bc_{ij}\SCAL \bOmega_k|$, which in turn implies that
$\sum_{k\in\calL}\mu_k \bOmega_k d_{ij}^k =0$. The assertion follows
readily.
\end{proof}

\begin{remark}[Limit problem and boundary conditions]
  Since $h_i$ behaves like the mesh size, $h$, the discrete
  problem~\eqref{Eq1:Th:diffusion_limit_AP} is a weak formulation with
  a penalty on the boundary condition scaling like $h^{-1}$.  The
  continuous problem associated with the discrete
  problem~\eqref{Eq1:Th:diffusion_limit_AP} consists of seeking
  $\psi^{\textup{lim}}\in H^1(\Dom)$ so that
  $-\DIV\Big(\frac{1}{3\sigma_s}\GRAD \psi^{\textup{lim}}\Big) + \sigma_a \psi^{\textup{lim}} =
  \oq$,
  with
  $\psi^{\textup{lim}}_{|\front} = \frac12\frm^\partial -3 \bfrM^\partial\SCAL
  \bn$.
  This result is coherent with
  \citep[Thm.~5.4]{Guermond_Kanschat_2010}. Recall that in general $\psi^{\textup{lim}}\ne \psi^0$ unless
$\frac12\frm^\partial +3 \bfrM^\partial\SCAL
  \bn =0$, see \citep[\S5.5]{Guermond_Kanschat_2010}.
\end{remark}

\begin{remark}[Fick's law]
Let us now interpret~\eqref{Eq2:Th:diffusion_limit_AP}. Assume that
the mesh is uniform or quasi-uniform in the neighborhood of the
Lagrange node $\ba_i$, then $h_i\approx h_{ij}$ and
$\sigma_{s,ij}\approx \sigma_{s,i}$. Hence,
$m_i \bJ_i^0\approx - \frac{1}{3\sigma_{s,i}}\sum_{j\in\calI(i)}
\bc_{ij} \Psi_j^0$.
Owing to the definition of the coefficients $\bc_{ij}$, this equation is a
consistent approximation of Fick's law
$\bJ = -\frac{1}{3\sigma_s} \GRAD \psi$.
\end{remark}

\begin{remark}[Meshes] 
  It is known for simplicial meshes and piecewise linear
  continuous finite elements that a sufficient condition for the
  inequality
  $c_{ij}^{\textup{g,d}} <0$
  to hold for all $i\in\calV$, $j\in\calI(i){\setminus}\{i\}$ is that the mesh family
  $\famTh$ satisfies the so-called acute angle condition, \eg
  \cite[Eq.~(2.5)]{XuZik:99}.
\end{remark}

\subsection{Positivity}
We establish in this section the positivity of the method defined
in~\eqref{AP_formulation_cG_plus_BC} using the definitions
in~\eqref{def_coefficients_AP}. We set $\Psi^{\min}:=\min_{(j,l)\in\calV\CROSS\calL} \Psi_{j,l}$
and $\Psi^{\max}:=\max_{(j,l)\in\calV\CROSS\calL} \Psi_{j,l}$.
\begin{theorem}[Minimum/Maximum principle] \label{Th:AP_positivity}
Let   $(\Psi_{ik})_{(i,k)\in\calV\CROSS \calL}$ be the solution to  \eqref{AP_formulation_cG_plus_BC}
with $d_{ij}^k$ and all the other parameters defined in~\eqref{def_dijk_AP}-\eqref{def_hij_AP}.
Let   $(i_0,k_0)$, $(i_1,k_1)\in \calV\CROSS \calL$ be such that $\Psi_{i_0k_0} =\Psi^{\min}$ 
and   $\Psi_{i_1k_1} = \Psi^{\max}$.
\begin{enumerate}[(i)]
\item \label{item1:Thm:min_principle} Assume that
  $\min_{(j,l)\in\calV\CROSS\calL}
  (\sigma_{a,j}+b_{jl}^\partial)>0$. Then
\begin{equation}
  \tfrac{m_{i_0} q_{i_0k_0} + \frac{b_{i_0k_0}^\partial}{\sigma_{s,i_0} h_{i_0} +1} \beta_{i_0k_0}^\partial}{m_{i_0}
    \sigma_{a,i_0} + \frac{b_{i_0k_0}^\partial}{\sigma_{s,i_0} h_{i_0} +1}} \le \Psi^{\min}\le 
  \Psi^{\max}\le  \tfrac{m_{i_1} q_{i_1k_1} + \frac{b_{i_1k_1}^\partial}{\sigma_{s,i_1} h_{i_1} +1} \beta_{i_1k_1}^\partial
  }{m_{i_1}
    \sigma_{a,i_1} +  \frac{b_{i_1k_1}^\partial}{\sigma_{s,i_1} h_{i_1} +1} }.
  \quad 
\end{equation}
\item \label{item2:Thm:min_principle} Otherwise, assume that for all
  $i\in\calV$ such that $\sigma_{a,i}=0$ and $b_{ik}^\partial=0$ the
  definition of $d_{ij}^k$ is slightly modified so that
  $\bOmega_k\SCAL\bc_{ij} < d_{ij}^k$ for all $j\in\calI(i)$ (instead
  of $\bOmega_k\SCAL\bc_{ij} \le d_{ij}^k$). If
  $0\le \min_{(i,k)\in \calV\CROSS\calL} q_{ik}$ and
  $0\le \min_{(i,k)\in (\calV\CROSS\calL)^\partial}
  \alpha_{ik}^\partial$, then $0\le \Psi^{\min}$.
\item \label{item3:Thm:min_principle} Moreover, under the same
  assumptions on $d_{ij}^k$ as in \eqref{item2:Thm:min_principle}, if
  $\max_{(i,k)\in \calV\CROSS\calL} q_{ik}\le 0$, then
  $\Psi^{\max} \le \max_{(i,k)\in (\calV\CROSS\calL)^\partial}
   \beta_{ik}^\partial$
\end{enumerate}
\end{theorem}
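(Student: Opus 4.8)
The plan is to follow, almost verbatim, the argument already used for Lemma~\ref{Lem:min_principle}, since the only structural changes between the graph‑viscosity system \eqref{dij_system} and the asymptotic‑preserving system \eqref{AP_formulation_cG} preserve exactly the signs on which that argument rests. Indeed, the viscosity coefficient $\bOmega_k\SCAL\bc_{ij}-d_{ij}^k$, which is nonpositive by the very definition \eqref{def_dijk_AP} of $d_{ij}^k$, is now merely multiplied by the positive factor $(\sigma_{s,ij}h_{ij}+1)^{-1}$; the scattering contribution still appears as the nonnegative coefficient $m_i\sigma_{s,i}(\sigma_{s,i}h_i+1)^{-1}$ multiplying $(\oPsi_i-\Psi_{ik})$; and the boundary penalty $b_{ik}^\partial$ is only rescaled by the positive factor $(\sigma_{s,i}h_i+1)^{-1}$, with $\alpha_{ik}^\partial$ replaced by $\beta_{ik}^\partial$.

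For part~\eqref{item1:Thm:min_principle} I would let $(i_0,k_0)$ be a minimizer, evaluate \eqref{AP_formulation_cG} there, use $\Psi_{jk_0}\ge\Psi_{i_0k_0}$ for $j\in\calI(i_0)$ and $\oPsi_{i_0}\ge\Psi_{i_0k_0}$ to drop the (nonpositive) viscosity sum and the (nonnegative) scattering term, and obtain
\[
\Bigl(m_{i_0}\sigma_{a,i_0}+\tfrac{b_{i_0k_0}^\partial}{\sigma_{s,i_0}h_{i_0}+1}\Bigr)\,\Psi^{\min}\ \ge\ m_{i_0}q_{i_0k_0}+\tfrac{b_{i_0k_0}^\partial}{\sigma_{s,i_0}h_{i_0}+1}\,\beta_{i_0k_0}^\partial ;
\]
dividing by the coefficient, which is positive under the hypothesis of part~\eqref{item1:Thm:min_principle}, gives the lower bound, and the symmetric computation at a maximizer $(i_1,k_1)$ gives the upper bound.

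Parts~\eqref{item2:Thm:min_principle} and~\eqref{item3:Thm:min_principle} would then be handled exactly as parts (ii)--(iii) of Lemma~\ref{Lem:min_principle}: in the remaining case $\sigma_{a,i_0}=0$, $b_{i_0k_0}^\partial=0$ the inequality above forces the viscosity sum to vanish, so with the strict modification of $d_{ij}^k$ one gets $\Psi_{jk_0}=\Psi_{i_0k_0}$ for all $j\in\calI(i_0)$; propagating this through the connected mesh $\calT_h$ one reaches a boundary degree of freedom $j^\ast$ on the inflow boundary for $\bOmega_{k_0}$, where $b_{j^\ast k_0}^\partial>0$, and applying part~\eqref{item1:Thm:min_principle} at $(j^\ast,k_0)$ gives $\Psi^{\min}=\Psi_{j^\ast k_0}\ge\beta_{j^\ast k_0}^\partial$ (and analogously for the maximum).

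The one genuinely new ingredient, and the only place where a short computation is required, is that $\beta_{ik}^\partial\ge0$ whenever the incoming data satisfies $\alpha\ge0$; this is what parts~\eqref{item2:Thm:min_principle}--\eqref{item3:Thm:min_principle} use to close the argument. Since $\theta_i\in[0,1]$ it suffices to observe that for $k\in\calL_i^-$ one has $\bOmega_k\SCAL\bn_i=-|\bOmega_k\SCAL\bn_i|$, whence
\[
\tfrac12\frm_i^\partial-3\bfrM_i^\partial\SCAL\bn_i=\frac{1}{\mes{\calS}}\sum_{k\in\calL_i^-}\mu_k\,\alpha_{ik}^\partial\,|\bOmega_k\SCAL\bn_i|\,\bigl(2+3|\bOmega_k\SCAL\bn_i|\bigr)\ \ge\ 0 .
\]
I do not anticipate any real obstacle: the proof is an almost line‑by‑line transcription of Lemma~\ref{Lem:min_principle}, the only care being the bookkeeping of the rescaled coefficients and this sign property of $\beta_{ik}^\partial$.
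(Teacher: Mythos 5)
Your proposal is correct and follows essentially the same route as the paper: evaluate the scheme at the extremal degree of freedom, use the sign of $\bOmega_k\SCAL\bc_{ij}-d_{ij}^k$ (unaffected by the positive rescaling factors) and of the scattering term to get the bound in (i), and in (ii)--(iii) propagate equality of the extremum through the connected mesh to an inflow boundary dof where the nonnegativity of $\beta_{ik}^\partial$ closes the argument. Your explicit verification that $\tfrac12\frm_i^\partial-3\bfrM_i^\partial\SCAL\bn_i\ge0$ is a correct computation that the paper merely asserts.
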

\begin{proof} We proceed as in the proof of Lemma~\ref{Lem:min_principle}.
 We start with the proof of~\eqref{item1:Thm:min_principle} and
 assume that
  $\min_{j\in\calV} (\sigma_{t,j}- \sigma_{s,j})>0$. Let
  $(i_0,k_0)\in \calV\CROSS \calL$ be the indices of the degree of freedom
  where the minimum is attained; that is, $\Psi_{ik}\ge \Psi_{i_0k_0}$
  for all $(i,k)\in\calV\CROSS \calL$. Then using that
  $\bOmega_k\SCAL\bc_{ij} - d_{ij}^k \le \max(\bOmega_k\SCAL\bc_{ij},0) -
  d_{ij}^k  \le 0$,
together with $\Psi_{jk_0} - \Psi_{i_0k_0} \ge 0$ for all $j\in\calI(i_0)$, and
 $\Psi_{i_0k_0} \le \oPsi_{i_0} $, we infer that
\begin{align*}
   m_{i_0} q_{i_0k_0} 
& + \frac{b_{i_0k_0}^\partial}{\sigma_{s,i_0} h_{i_0} +1}\beta_{i_0k_0}^\partial 
= \!\!\sum_{j\in\calI(i_0){\setminus}\{i_0\}}\!\!
   \frac{\bOmega_{k_0}\SCAL\bc_{i_0j} - d_{i_0j}^{k_0}}{\sigma_{s,i_0j}h_{i_0j}+1} (\Psi_{jk_0} - \Psi_{i_0k_0})  \\
& + \frac{m_{i_0} \sigma_{s,i_0}}{\sigma_{s,i_0} h_{i_0} +1} (\Psi_{i_0k_0} -\oPsi_{i_0})
+ m_{i_0} \sigma_{a,i_0} \Psi_{i_0k_0} + \frac{b_{i_0k_0}^\partial}{\sigma_{s,i_0} h_{i_0} +1}\Psi_{i_0k_0}  \\
  & \le  m_{i_0} \sigma_{a,i_0} \Psi_{i_0k_0} + \frac{b_{i_0k_0}^\partial}{\sigma_{s,i_0} h_{i_0} +1}\Psi_{i_0k_0}.
 \end{align*}
 Hence
 $m_{i_0} q_{i_0k_0} + \frac{b_{i_0k_0}^\partial}{\sigma_{s,i_0} h_{i_0}
   +1} \beta_{i_0k_0}^\partial \le m_{i_0} \sigma_{a,i_0} +
 \frac{b_{i_0k_0}^\partial}{\sigma_{s,i_0} h_{i_0} +1}\Psi_{i_0k_0}$.
 The assertion follows readily. The proof of the other assertion regarding
 $\Psi^{\max}$ is analogous.

 Proof of \eqref{item2:Thm:min_principle} assuming that
 $0\le \min_{(i,k)\in \calV\CROSS\calL} q_{ik}$ and
 $0\le \min_{(i,k)\in (\calV\CROSS\calL)^\partial}
 \alpha_{ik}^\partial$.
 From part~\eqref{item1:Lem:min_principle} we conclude that we need to
 prove $\Psi_{i_0k_0}\ge 0$ only in the case $\sigma_{a,i_0} =0 $ and
 $b_{i_0k_0}^\partial=0$. Assuming that $\sigma_{a,i_0} =0 $ and
 $b_{i_0k_0}^\partial=0$, we have from
 part~\eqref{item1:Lem:min_principle} the following inequality
 \[
   0 \le m_{i_0} q_{i_0k_0} 
      \le \!\!\sum_{j\in\calI(i_0){\setminus}\{i_0\}}\!\!
   \frac{\bOmega_{k_0}\SCAL\bc_{i_0j} - d_{i_0j}^{k_0}}{\sigma_{s,i_0j_0}h_{i_0j}+1} 
(\Psi_{jk_0} - \Psi_{i_0k_0}) \le 0.
 \]
 The assumption $\bOmega_k\SCAL\bc_{i_0j} - d_{i_0j}^{k_0}<0$ for all
 $j\in\calI(i_0)$, implies that $\Psi_{jk_0} - \Psi_{i_0k_0}=0$ for
 all $j\in\calI(i_0)$. Therefore, we conclude that the global minimum
 is attained not only at the degree of freedom $(i_0,k_0)$ but also in
 the whole neighborhood, \ie for all $j\in\calI(i_0)$. Repeating the
 above argument for a global minimum at $(j, k_0)$ for all
 $j\in\calI(i_0)$, we derive that the global minimum is either
 nonnegative (if $m_{j} \sigma_{a,j} + b_{jk_0}^\partial>0$) or again
 attained in the whole neighborhood of $j$, \ie for all
 $s\in\calI(j)$. This process can terminate in two ways: (i) either
 the global minimum is nonnegative at some $j$, \ie
 $m_{j} \sigma_{a,j} + b_{jk_0}^\partial>0$; 
 (ii) or the global
 minimum is attained at all of the degrees of freedom topologically
 connected to $i_0$.  In the first case we have proved the
 non-negativity, in the second case we have that
 $\Psi_{jk_0}= \Psi_{i_0k_0}$ for all $j$ in the same connected
 component as $i_0$, which is the entire set $\calV$ since $\calT_h$
 is connected (because $\Dom$ is connected).  However, for any fixed
 $k_0$ there exists $j$ such that $\Psi_{jk_0}$ is on the inflow
 boundary for $\bOmega_{k_0}$. That is, we have  $b_{jk_0}^\partial>0$,
 and  conclude (see \eqref{AP_formulation_cG_plus_BC}) that
  $\Psi_{i_0k_0}\ge 0$ because $\beta_{ij}^\partial = \theta_i \alpha_{ik}^\partial 
+ (1-\theta_i)(\tfrac12 \frm_i^\partial -3\bfrM_i^\partial\SCAL\bn_i)\ge 0$
on the  the inflow boundary.

 Proof of \eqref{item3:Thm:min_principle} assuming that
 $\min_{(i,k)\in \calV\CROSS\calL} q_{ik}\le 0$. By
 proceeding as in Step~\eqref{item1:Lem:min_principle}, we infer that
 \begin{align*}
   m_{i_1} q_{i_1k_1} & 
+ \frac{b_{i_1k_1}^\partial}{\sigma_{s,i_1} h_{i_1} +1} (\beta_{i_1k_1}^\partial-\Psi_{i_1k_1}) \\
  & \ge  \!\!\sum_{j\in\calI(i_1){\setminus}\{i_1\}}\!\!
   \frac{\bOmega_{k_1}\SCAL\bc_{i_1j} - d_{i_1j}^{k_1}}{\sigma_{s,i_1j}h_{i_1j}+1} (\Psi_{jk_1} - \Psi_{i_1k_1})
  + m_{i_1} \sigma_{a,i_1}\Psi_{i_1k_1}  \ge 0,
 \end{align*}
 \ie
 $(m_{i_1} \sigma_{a,i_1} + \frac{b_{i_1k_1}^\partial}{\sigma_{s,i_1} h_{i_1} +1} )\Psi_{i_1k_1} \le
 m_{i_1} q_{i_1k_1} + \frac{b_{i_1k_1}^\partial}{\sigma_{s,i_1} h_{i_1} +1} \beta_{i_1k_1}^\partial$;
 which implies $\Psi_{i_1k_1} \le \beta_{i_1k_1}^\partial$ if $b_{i_1k_1}^\partial>0$. Hence we just
 need to consider the case $b_{i_1k_1}^\partial=0$. In that case
 $0 \ge \sum_{j\in\calI(i_1){\setminus}\{i_1\}}
 \frac{\bOmega_{k_1}\SCAL\bc_{i_1j} - d_{i_1j}^{k_1}}{\sigma_{s,i_1j}h_{i_1j}+1}  (\Psi_{jk_1} -
 \Psi_{i_1k_1}) \ge 0$ and $\Psi_{jk_1} =\Psi_{i_1k_1}$ for all
 $j\in \calI(i_1)$. Then we proceed as in
 Step~\eqref{item2:Lem:min_principle} until we reach a degree of freedom $j$ that is on
 the inflow boundary for $\bOmega_{k_1}$, \ie $b_{jk_1}^\partial>0$.
 Then $\Psi^{\max} = \Psi_{i_1k_1} = \Psi_{jk_1} \le \beta_{j,k_1}^\partial$.
 \end{proof}

\section{Numerical illustrations} \label{Sec:Numerical_results} 
We present in this section numerical results to illustrate the
positive and asymptotic preserving
algorithm~\eqref{AP_formulation_cG_plus_BC} described in
\S\ref{Sec:AP_cG}.   We compare this
technique in various regimes with the standard dG1 technique using the upwind flux.
 
\subsection{Numerical details} \label{Sec:Numerical_details}
The positive and asymptotic preserving algorithm defined
in~\eqref{AP_formulation_cG_plus_BC} is implemented with piecewise
linear continuous finite elements on simplices. We use the same code
for one-dimensional and two-dimensional tests. The meshes in one
dimension are uniform. The meshes in two space dimension are
non-uniform, composed of triangles, and have the Delaunay
property. Nothing special is done to make the triangulations satisfy
the acute angle condition, \ie the condition may not be satisfied for
a few pairs of vertices.  In one dimension we use the Gauss-Legendre
quadrature for the angular discretization: the $x_1$-component of the
angles are the quadrature points of the Gaussian quadrature over
$[-1,1]$ and the weights are the weights of the Gaussian quadrature.
In two-dimensions we use the standard triangular $S_N$
quadrature (Gauss-Legendre quadrature along the polar axis and
equi-distributed angles along the azimuth with $\frac18 N(N+2)$ angles per octant).  Since the size of the
problems involved here is small (at most $2\CROSS 10^6$ degrees of
freedom), we assemble the sparse matrix defined in
\eqref{AP_formulation_cG_plus_BC} using the compressed sparse row
format and solve it using Pardiso~(see \eg \citet{Pardiso_2014}).
More sophisticated techniques involving source iterations and
synthetic acceleration could be used for significantly larger
systems. We do not discuss this issue since it is out of the scope of
the paper.

In order to assess the asymptotic-preserving approach, we compare it
against a state-of-the-art technique. More specifically,
\eqref{model_pb} is solved using dG1 with the upwind numerical flux
and the same triangular $S_N$ quadrature as above. The linear system
is solved by iterating on the scattering source (see \eg
\cite{adams_larsen_iter_methods}); for instance starting with some
guess $\obpsi_{h}^{(0)}$, one constructs a sequence
$\bpsi_h^{(0)},\ldots, \bpsi_{h}^{(\ell)},\ldots$ Given some state
$\bpsi_{h}^{(\ell)}$ we compute an intermediate state
$\bpsi_{h}^{(\ell+\frac12)}$ such that
\begin{subequations}
\begin{align}
 \sum_{j\in\calI(i)}\!\! A_{ij}^k \Psi_{jk}^{(\ell+\frac12)}
+ m_i \sigma_{t,i} \Psi_{ik}^{(\ell+\frac12)}
 +b_{ik}^\partial \Psi_{ik}^{(\ell+\frac12)} = m_i \sigma_{s,i} \oPsi_{i}^{(\ell)} + m_i q_{ik}
+b_{ik}^\partial \alpha_{ik}^\partial, \label{dG_system_upwind} \\
A_{ij}^k := 
  \begin{cases} 
\int_K  (\bOmega_k\SCAL \GRAD\varphi_j)\varphi_i\diff \bx 
& \text{$j\in \calI(K){\setminus}\calI(\partial K\upi)$} \\
\int_K  (\bOmega_k\SCAL \GRAD\varphi_j)\varphi_i \diff \bx 
+\int_{\partial K}\varphi_i \varphi_j (\bOmega_k\SCAL\bn_K)_{-}
 \diff \bx,
& \text{$j\in \calI(\partial K\upi)$} \\
-\int_{\partial_K} \frac{|\bOmega_k\SCAL\bn_K|-\bOmega_k\SCAL\bn_K}{2} \varphi_i \varphi_j \diff \bx,
& \text{$j\in \calI(\partial K\upe)$},
\end{cases}
\end{align}
\end{subequations}
with $z_{-}:=\frac12(|z|-z)$.
For each direction $k$, \eqref{dG_system_upwind} is solved
cell-by-cell by sweeping through the mesh from the inflow boundary to
the outflow boundary defined by the angle $\bOmega_k$ (a process
termed ``transport sweep'' in the radiation transport
community). 
Without synthetic acceleration, we
set $\bpsi_h^{(\ell+1)} = \bpsi_h^{(\ell+1/2)}$ and the new source
iteration ($\ell \leftarrow \ell+1$) can proceed.  However, in highly
diffusive configurations, a diffusion synthetic accelerator is invoked
to compute a correction $\delta \bpsi_h^{\ell+1}$ to improve the scalar
flux iterate; at the end of the process we set
$\bpsi_h^{(\ell+1)} = \bpsi_h^{(\ell+1/2)} + \delta \bpsi_h^{\ell+1}.$ Here, we use a
dG compatible diffusion synthetic accelerator based on an interior
penalty technique; see \eg \cite{wang_ragusa_dsa}  
for additional details. 

\subsection{Manufactured solution}
We first test our piecewise linear, continuous finite element
implementation of the algorithm described in \S\ref{Sec:AP_cG} on a
manufactured solution.  The domain is $\Dom=(0,1)^2\CROSS\Real$, with
$\sigma_t=\sigma_s=1$, and the solution is
$\bpsi:=(\psi_1,\ldots,\psi_L)$ with
\begin{equation}
  \psi_{k}(\bx) = 2+\sin(\bOmega_k\SCAL\bx)+ \sin(\pi x_1)\sin(\pi x_2), \label{manufactured-solution}
\end{equation}
where $k\in \calL$, $\bx:=(x_1,x_2)\in\Dom$.
The source term $q(\bx,\bOmega_k)$ is computed accordingly with
$\opsi(\bx):= \frac{1}{\mes{\calS}} \sum_{k\in\calK} \psi_{k}(\bx)$.

The relative errors in the $L^2$-norm, $L^\infty$-norm, and
$H^1$-semi-norm are calculated on five nonuniform meshes composed of
triangles with $140$, $507$, $1927$, $7545$, and $29870$ Lagrange
nodes, respectively; the corresponding mesh-sizes are approximately
$h\approx 0.1$, $0.5$, $0.025$, $0.125$, and $0.00625$. We define the
error $\be:=(e_1,\ldots, e_L)$ with
$e_k:=\psi_{h,k}-\Pi_h\upL(\psi_k)$, where $\Pi_h\upL(\psi_k)$ is the
Lagrange interpolant of $\psi_k$ in $P\upg(\calT_h)$, and we set
\begin{equation}
  \|\be\|_{L^2}^2 = \sum_{k\in\calL} \mu_k \|e_k\|_{L^2(\Dom)}^2, \quad
  \|\be\|_{L^\infty}= \max_{k\in\calL} \|e_k\|_{L^\infty(\Dom)},
\end{equation}
The relative errors are denoted and defined as follows:
$\text{rel}(\|\be\|_{L^2})= \|\be\|_{L^2}/\|\psi\|_{L^2}$,
$\text{rel}(\|\be\|_{L^\infty})= \|\be\|_{L^\infty}/\|\psi\|_{L^\infty}$,
$\text{rel}(\|\GRAD \be\|_{\bL^2})= \|\GRAD \be\|_{\bL^2}/\|\GRAD
\psi\|_{\bL^2}$. The results for the $S_6$ and $S_{10}$ quadratures
are reported in
Table~\ref{Table:Convergence_2D_manufactured_solution}. We observe
that, as expected, the method is first-order accurate in space in the
$L^2$-norm, and it is $\calO(h^{\frac12})$ in the $L^\infty$-norm and
in the $H^1$-semi-norm. These results are compatible with the best
theoretical error estimates known for the approximation of the linear
transport equation using first-order viscosities. 
\begin{table}[h]
\setlength{\tabcolsep}{3pt}
\centering\scriptsize
\begin{tabular}{|r|c|c|c|c|c|c|c|} \hline
 & \#dofs & $\text{rel}(\|\be\|_{L^2}\!)$ & rate & $\text{rel}(\|\be\|_{L^\infty}\!)$ & rate & $\text{rel}(\|\GRAD \be\|_{\bL^2}\!)$ & rate\\ \hline
  \multirow{5}{*}{\rotatebox[origin=l]{90}{$S_6$}}
&   140 & 5.20E-02 & --  & 2.89E-01 & -- & 3.07E-01 & -- \\ \cline{2-8}
&  507 & 2.70E-02 & 1.02 & 2.08E-01 & 0.51 & 2.01E-01 & 0.66 \\ \cline{2-8}
& 1927 & 1.37E-02 & 1.01 & 1.48E-01 & 0.51 & 1.36E-01 & 0.59 \\ \cline{2-8}
& 7545 & 6.93E-03 & 1.00 & 1.05E-01 & 0.50 & 9.38E-02 & 0.54 \\ \cline{2-8}
 & 29870 & 3.48E-03 & 1.00 & 7.48E-02 & 0.50 & 6.55E-02 & 0.52 \\ \hline
  \multirow{5}{*}{\rotatebox[origin=l]{90}{$S_{10}$}}
 &   140 & 5.19E-02 & --  & 2.91E-01 & -- &3.07E-01& -- \\ \cline{2-8}
 & 507 & 2.69E-02 & 1.02 & 2.08E-01 & 0.52 & 2.01E-01 & 0.66 \\ \cline{2-8}
 &1927 & 1.37E-02 & 1.01 & 1.48E-01 & 0.51 & 1.37E-01 & 0.58 \\ \cline{2-8}
 &7545 & 6.93E-03 & 1.00 & 1.09E-01 & 0.45 & 9.48E-02 & 0.54 \\ \cline{2-8}
&29870 & 3.48E-03 & 1.00 & 8.22E-02 & 0.42 & 6.64E-02 & 0.52 \\ \hline
\end{tabular}%
\caption{Convergence tests with 
respect to mesh-size  with solution~\eqref{manufactured-solution} and quadrature $S_6$ and $S_{10}$.}%
\label{Table:Convergence_2D_manufactured_solution}%
\end{table}

\subsubsection{Diffusion limit with constant cross sections}
We consider the two-dimensional domain $\Dom=(0,1)^2\CROSS \Real$ with
constant cross sections $\sigma_t=\sigma_s=\frac{1}{\epsilon}$ and
source term
$q(\bx)=\epsilon \frac{2}{3}\pi^2\sin(\pi x_1)\sin(\pi x_2)$. The
diffusion limit corresponding to $\epsilon\to 0$ is
$\psi^0(\bx) = \sin(\pi x_1)\sin(\pi x_2)$. We solve \eqref{model_pb}
with continuous linear finite elements and the algorithm described in
\S\ref{Sec:AP_cG}.  The meshes are nonuniform and composed of
triangles. To estimate the convergence we use five meshes with $140$,
$507$, $1927$, $7545$, and $29870$ Lagrange nodes, respectively; the
corresponding mesh-sizes are approximately $h\approx 0.1$, $0.5$,
$0.025$, $0.125$, and $0.00625$. We use the $S_6$ 
angular quadrature.

\begin{table}[h]
\setlength{\tabcolsep}{3pt}
\centering\scriptsize
\begin{tabular}{|r|c|c|c|c|c|} \hline
$\epsilon$ & \#dofs & $\text{rel}(\|\be\|_{L^2}\!)$ & rate & $\text{rel}(\|\GRAD \be\|_{\bL^2}\!)$ & rate \\ \hline
\multirow{5}{*}{\rotatebox[origin=l]{90}{$10^{-3}$}}
& 140 & 2.01E-02 & --  & 9.68E-03 & --   \\ \cline{2-6}
&  507 & 2.15E-03 & 2.34 & 8.00E-03 & 1.44 \\ \cline{2-6}
& 1927 & 2.91E-03 & -.45 & 6.62E-03 & 0.28 \\ \cline{2-6}
& 7545 & 3.11E-03 & -.10 & 7.75E-03 & -.23 \\ \cline{2-6}
&29870 & 3.17E-03 & -.03 & 8.84E-03 & -.19 \\ \hline 
\multirow{ 5}{*}{\rotatebox[origin=l]{90}{$10^{-4}$}}
&  140 & 1.92E-02 & --  & 1.20E-02 & --   \\ \cline{2-6}
&  507 & 2.87E-03 & 2.22 & 5.85E-03 & 1.85 \\ \cline{2-6}
& 1927 & 5.43E-04 & 2.49 & 2.16E-03 & 1.49 \\ \cline{2-6}
& 7545 & 2.01E-04 & 1.45 & 1.21E-03 & 0.85 \\ \cline{2-6}
&29870 & 2.53E-04 & -.33 & 1.33E-03 & -.13 \\ \hline
\end{tabular}\hfill \begin{tabular}{|r|c|c|c|c|c|} \hline
$\epsilon$ & \#dofs & $\text{rel}(\|\be\|_{L^2}\!)$ & rate & $\text{rel}(\|\GRAD \be\|_{\bL^2}\!)$ & rate \\ \hline
\multirow{ 5}{*}{\rotatebox[origin=l]{90}{$10^{-5}$}}
&  140 & 1.92E-02 & --  & 1.22E-02 & --   \\ \cline{2-6}
&  507 & 3.12E-03 & 2.12 & 5.76E-03 & 1.87 \\ \cline{2-6}
& 1927 & 7.59E-04 & 2.12 & 1.99E-03 & 1.59 \\ \cline{2-6}
& 7545 & 1.72E-04 & 2.18 & 7.17E-04 & 1.49 \\ \cline{2-6}
&29870 & 3.28E-05 & 2.41 & 2.73E-04 & 1.40 \\ \hline
\multirow{ 5}{*}{\rotatebox[origin=l]{90}{$10^{-6}$}}
&  140 & 1.91E-02 & --   & 1.22E-02 & --   \\ \cline{2-6}
&  507 & 3.14E-03 & 2.11 & 5.75E-03 & 1.87 \\ \cline{2-6}
& 1927 & 7.84E-04 & 2.08 & 1.98E-03 & 1.60 \\ \cline{2-6}
& 7545 & 1.93E-04 & 2.06 & 7.07E-04 & 1.51 \\ \cline{2-6}
&29870 & 4.64E-05 & 2.07 & 2.35E-04 & 1.60 \\ \hline
\end{tabular}
\caption{Convergence test on $\be:=\obpsi_h-\Pi_h\upL(\bpsi^0)$ with 
respect to the mesh-size and $\epsilon$.}
\label{Table:Convergence_2D_homogeneous}
\end{table}
The results for $\epsilon\in\{10^{-3},10^{-4},10^{-5},10^{-6}\}$ are
reported in Table~\ref{Table:Convergence_2D_homogeneous}. We show in
this table the relative $L^2$-norm and the relative $H^1$-semi-norm of the difference
$\obpsi_h-\Pi_h\upL(\bpsi^0)$, where $\Pi_h\upL(\bpsi^0)$ is the Lagrange
interpolant of $\bpsi^0$. We clearly observe that, just like proved  in
\citep[Th.~5.3]{Guermond_Kanschat_2010} for the upwind dG1 approximation, the scalar flux $\obpsi_h$
converges optimally to $\bpsi_h^0$ when $\epsilon$ is significantly
smaller than the mesh-size. The convergence order is $\calO(h^2)$ in
the $L^2$-norm.  It seems that some super-closeness phenomenon occurs
in the $H^1$-semi-norm since
$\|\GRAD(\obpsi_h-\Pi_h\upL(\bpsi^0))\|_{\bL^2}$ converges like
$\calO(h^{1.5})$.

\subsection{One-dimensional results} \label{Sec:oneD}
We now perform four one-dimensional tests and compare the positive
asymptotic preserving method (with piecewise linear continuous finite
elements) with the upwind dG1 approximation. We use an $S_8$ 
angular quadrature (8 discrete directions in 1D) for all the cases. The
angles are enumerated in increasing order from 1 to 8. The data for
the four cases are reported in Table~\ref{Tab:oneD_data}.  The
boundary condition for cases 1, 3, and 4 are $\bpsi_{h|\front_-}=0$
(this is the so-called vacuum boundary condition).  The boundary
conditions for case 2 are $\bpsi_{h,k}=0$ for $k\ne 5$, $1\le k\le 8$,
and $\psi_{h,5}(0)=1.0$.
\begin{table}[h]
\setlength{\tabcolsep}{3pt}
\centering\scriptsize
\begin{tabular}{|r|c|c|c|c|c|c|} \hline
  & \#zones & \multicolumn{5}{|c|}{5}  \\ \hline
  \multirow{ 5}{*}{\rotatebox[origin=l]{90}{Case 1}} 
  &Length & 2.0 & 1.0 & 2.0 & 1.0 & 2.0 \\ \cline{2-7}
  &$\sigma_s$ & 0.0 & 0.0  & 0.0 & 0.9 & 0.9 \\ \cline{2-7}
  &$\sigma_t$ &50.0& 5.0 & 0.0 & 1.0 &1.0  \\ \cline{2-7}
  &$q$ & 50. & 0.0 & 0.0 & 1.0 & 1.0 \\ \cline{2-7}
  &\#dofs & 25 & 25 & 25 & 25 & 25 \\  \cline{2-7}
  & B.C.& \multicolumn{5}{|c|}{Vac.}   \\\hline
\end{tabular}
\begin{tabular}{|r|c|c|} \hline
  & \#zones & \multicolumn{1}{|c|}{1}  \\ \hline
  \multirow{6}{*}{\rotatebox[origin=l]{90}{Case 2}} 
  &Length & 10.0  \\ \cline{2-3}
  &$\sigma_s$ & 100.0 \\ \cline{2-3}
  &$\sigma_t$ &100.0  \\ \cline{2-3}
  &$q$ & 0.0  \\ \cline{2-3}
  &\#dofs & 100 \\ \cline{2-3}
  & B.C.& $\psi_{5}(0)=0$  \\ \hline
\end{tabular}
\begin{tabular}{|r|c|c|} \hline
  & \#zones & \multicolumn{1}{|c|}{1}  \\ \hline
  \multirow{6}{*}{\rotatebox[origin=l]{90}{Case 3}} 
  &Length & 10.0  \\ \cline{2-3}
  &$\sigma_s$ & 10.0 \\ \cline{2-3}
  &$\sigma_t$ &10.0  \\ \cline{2-3}
  &$q$ & 0.1  \\ \cline{2-3}
  &\#dofs & 100 \\ \cline{2-3}
  & B.C.& Vac. \\ \hline
\end{tabular}
\begin{tabular}{|r|c|c|} \hline
  & \#zones & \multicolumn{1}{|c|}{1}  \\ \hline
  \multirow{6}{*}{\rotatebox[origin=l]{90}{Case 4}} 
  &Length & 100.0  \\ \cline{2-3}
  &$\sigma_s$ & 0.09999 \\ \cline{2-3}
  &$\sigma_t$ &0.1  \\ \cline{2-3}
  &$q$ & 1.0  \\ \cline{2-3}
  &\#dofs & 100 \\ \cline{2-3}
  & B.C.& Vac. \\ \hline
  \end{tabular}%
\caption{Data for the one-dimensional test cases.} \label{Tab:oneD_data}%
\end{table}

The results are reported in Figure~\ref{Fig:oneD_results}. We show in
Panels~\eqref{Fig:oneD_results:a}-\eqref{Fig:oneD_results:c} the
scalar flux for the dG1 approximation (labeled dG1) and for the positive asymptotic
preserving technique (labeled AP cG1).  We observe a fair agreement between the two
methods given the number of grid points.
Panel~\eqref{Fig:oneD_results:d} shows the angular flux $\psi_{h,1}$
for case 4.  For this case the dG1 approximation gives negatives
values at $x=100$ on the angular fluxes $1$, $2$, and $3$ (the values
are  $-0.24$, $-0.22$, $-0.066$, respectively (approximated to 2 digits)).
In all the cases the asymptotic
preserving technique is always nonnegative.
   
\begin{figure}[h]
\begin{subfigure}[t]{0.25\textwidth} 
\includegraphics[width=\textwidth]{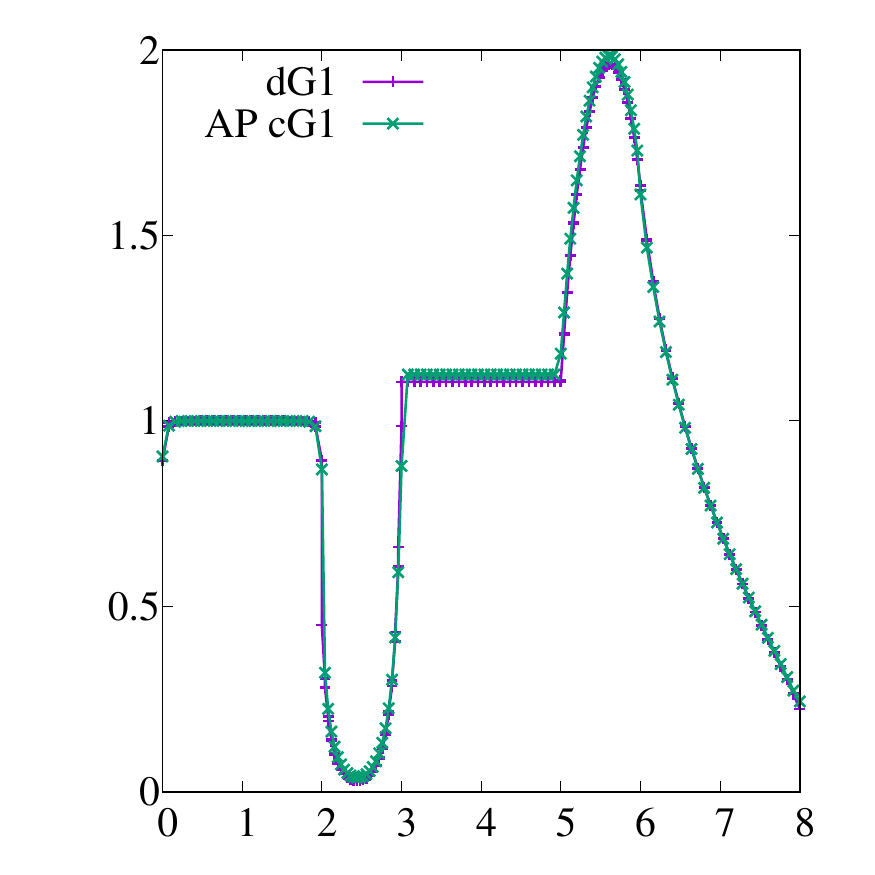}%
\caption{Case 1, $\opsi_h$}\label{Fig:oneD_results:a}
\end{subfigure}%
\begin{subfigure}[t]{0.25\textwidth}
\includegraphics[width=\textwidth]{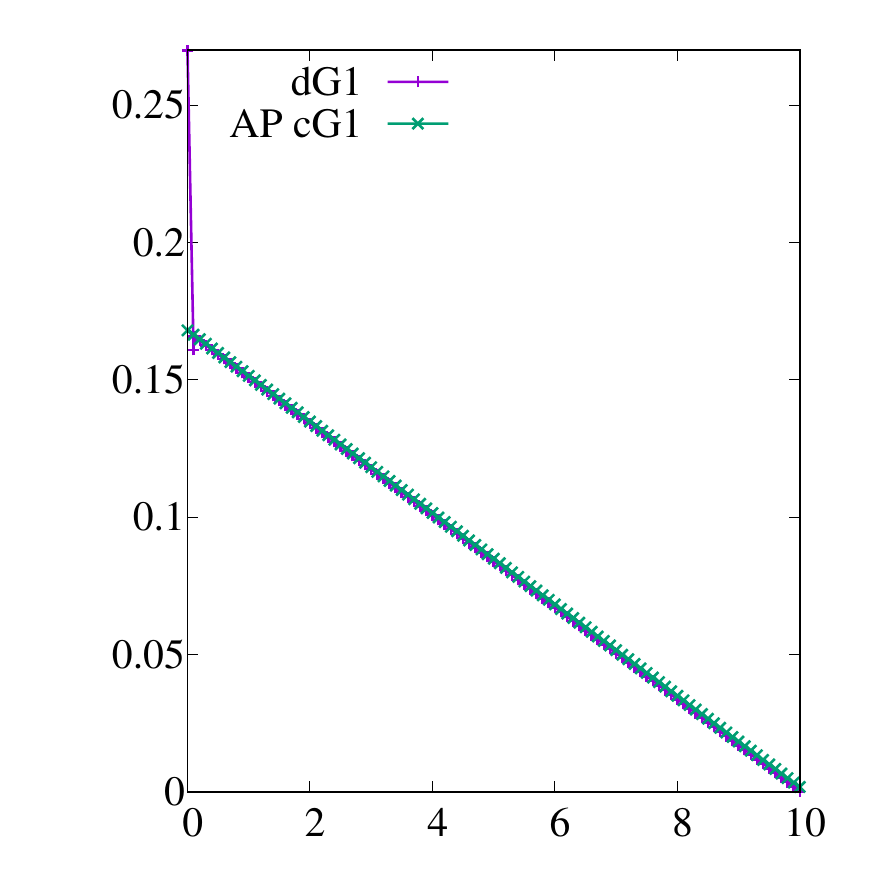}%
\caption{Case 2, $\opsi_h$}\label{Fig:oneD_results:b}
\end{subfigure}%
\begin{subfigure}[t]{0.25\textwidth}
\includegraphics[width=\textwidth]{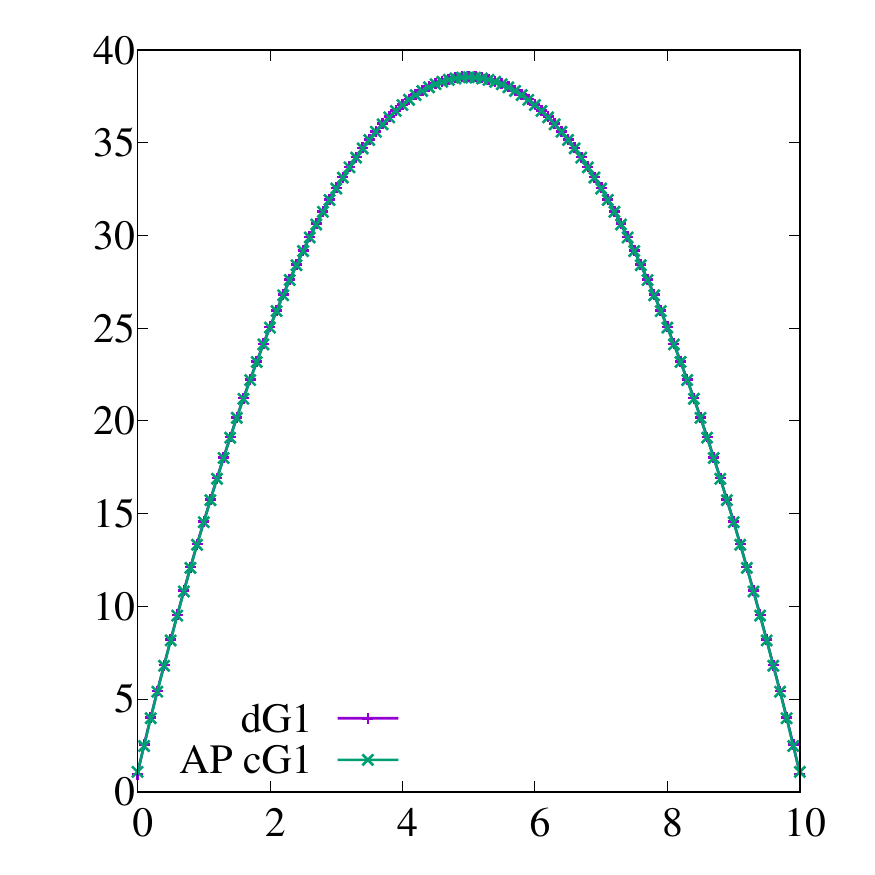}%
\caption{Case 3, $\opsi_h$}\label{Fig:oneD_results:c}
\end{subfigure}%
\begin{subfigure}[t]{0.25\textwidth}
\includegraphics[width=\textwidth]{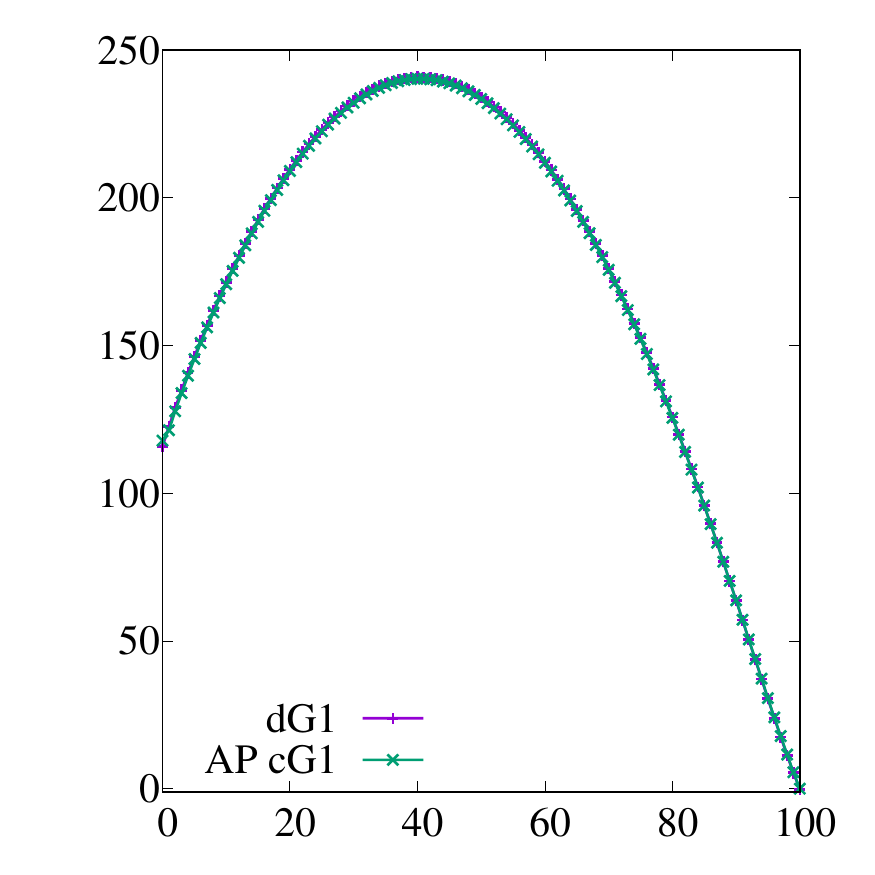}%
\caption{Case 4, $\psi_{h,1}$}\label{Fig:oneD_results:d}
\end{subfigure}\vspace{-\baselineskip}
\caption{Comparison between the (first-order) positive, asymptotic preserving
 cG1 method and the (second-order) upwind dG1 method.}\vspace{-\baselineskip}  \label{Fig:oneD_results}
\end{figure}

\subsection{Boundary effects}
We consider the problem~\eqref{model_pb} in the two-dimensional domain
$\Dom=(0,100)^2\CROSS \Real$ with uniform cross sections
$\sigma_t(\bx)=0.1$, $\sigma_s(\bx)=0.0999$ and uniform source term
$q(\bx,\bOmega)=1$ for all $(\bx,\bOmega)\in\Dom\CROSS \calS$.  The
boundary condition is set to zero $\alpha(\bx,\bOmega)=0$ for all
$(\bx,\bOmega)\in\front_-$.  (This is the two-dimensional counterpart
of the one-dimensional case 4 discussed in \S\ref{Sec:oneD}.)  We use
the $S_6$  quadrature for the discrete
ordinates (24 directions in 2D). The approximation in space for the
asymptotic preserving method is done on a non-uniform grid composed of
$151294$ triangles with $76160$ grid points (\ie $1\,829\,520$ dofs in
total).  The dG1 approximation is done with $64\CROSS 64$ cells, that is
$16384$ space dofs (\ie $393\,216$ dofs in total).

\begin{figure}[h]  \centering
\begin{subfigure}[t]{0.25\textwidth}
\includegraphics[width=\textwidth]{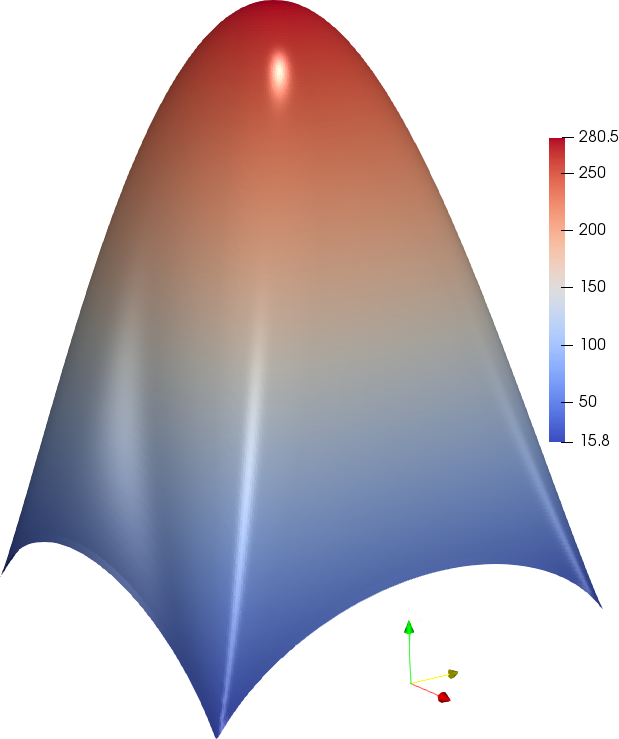}%
\caption{$\opsi_h$, AP scheme} 
\end{subfigure}%
\begin{subfigure}[t]{0.25\textwidth}
\includegraphics[width=\textwidth]{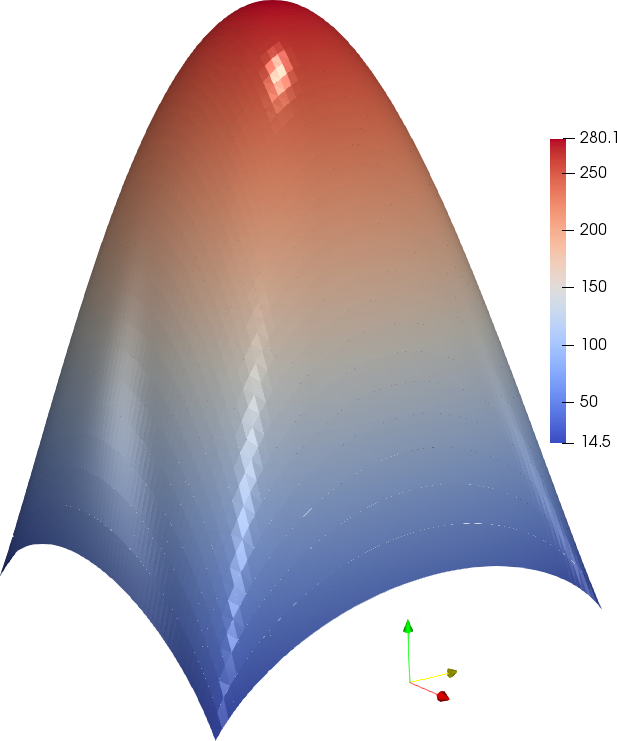}%
\caption{$\opsi_h$, dG1 scheme}
\end{subfigure}%
\begin{subfigure}[t]{0.25\textwidth}
\includegraphics[width=\textwidth]{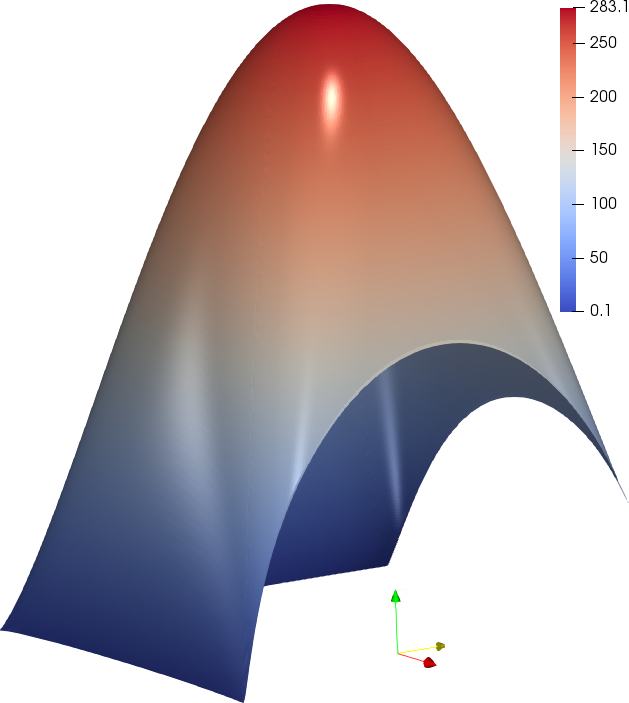}
\caption{$\psi_{h,1}$, AP scheme}
\end{subfigure}%
\begin{subfigure}[t]{0.25\textwidth}
\includegraphics[width=\textwidth]{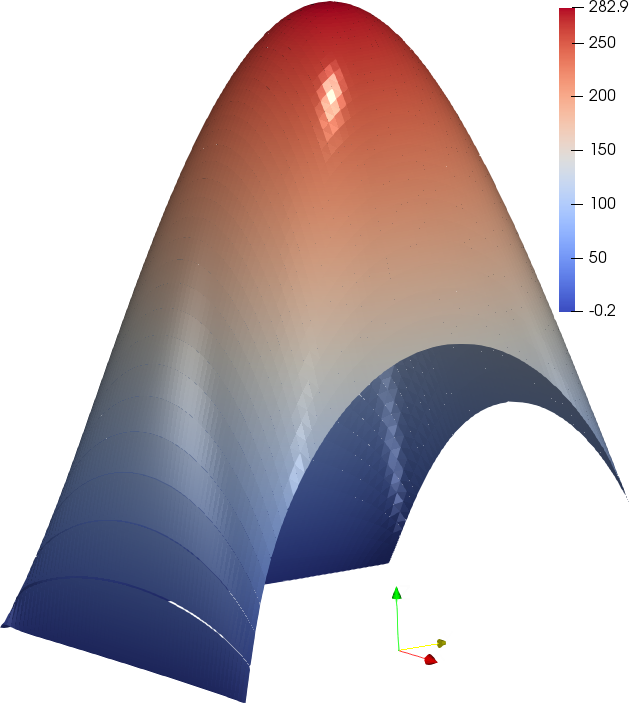}
\caption{$\psi_{h,1}$, dG1 scheme} \label{Fig:2D_BC_effects:d}
\end{subfigure}\vspace{-\baselineskip}
\caption{Scalar $\opsi_h$ and angular flux $\psi_{h,1}$.}\vspace{-\baselineskip}
\label{Fig:2D_BC_effects}
\end{figure}
We show in Figure~\ref{Fig:2D_BC_effects} the scalar flux and the
angular flux corresponding to the first angle $\bOmega_1$. We have
verified that the angular fluxes for the asymptotic preserving method
are all non-negative, as expected, but the upwind dG1 approximation
gives negative angular fluxes. In particular, we observe in
Panel~\eqref{Fig:2D_BC_effects:d} that the minimum value of the first
angular flux of the dG1 approximation is equal to $-0.2$ (1 digit
roundoff approximation.)  


\subsection{Reflection effects}
We now consider the two-dimensional problem with reflection effects.
The domain is $\Dom=(0,1)^2\CROSS \Real$ with uniform cross sections
$\sigma_t(\bx)=100$, $\sigma_s(\bx)=99$ if $x_2\ge 0.5$ (optically
thick and diffusive zone), and $\sigma_t(\bx)=\sigma_s(\bx)=0$ if
$x_2\le 0.5$ (void).  We use the $S_6$ 
quadrature. The left boundary is illuminated with intensity $1$ along
the first direction of the quadrature
$\bOmega_1:=(0.93802334,0.25134260,0.23861919)$ (eight digits
truncation). The incoming flux is set to $0$ along the bottom boundary
for $\bOmega_1$.  For all the other angular fluxes we set
$\psi_{h,k|\front_-}=0$, $k\in\calL{\setminus}\{1\}$.  The
approximation in space for the asymptotic preserving method is done on
a non-uniform grid composed of $151434$ triangles with $76230$ grid
points (\ie $1\,829\,520$ dofs in total). The dG1 computation is done
with $256\CROSS 256$ cells to ascertain the accuracy of the
  solution since it is our reference; that makes $262144$ dofs for the space
approximations (\ie $6\,291\,456$ dofs in total). 

\begin{figure}[h]  \centering
\begin{subfigure}[t]{0.25\textwidth}
\includegraphics[width=\textwidth]{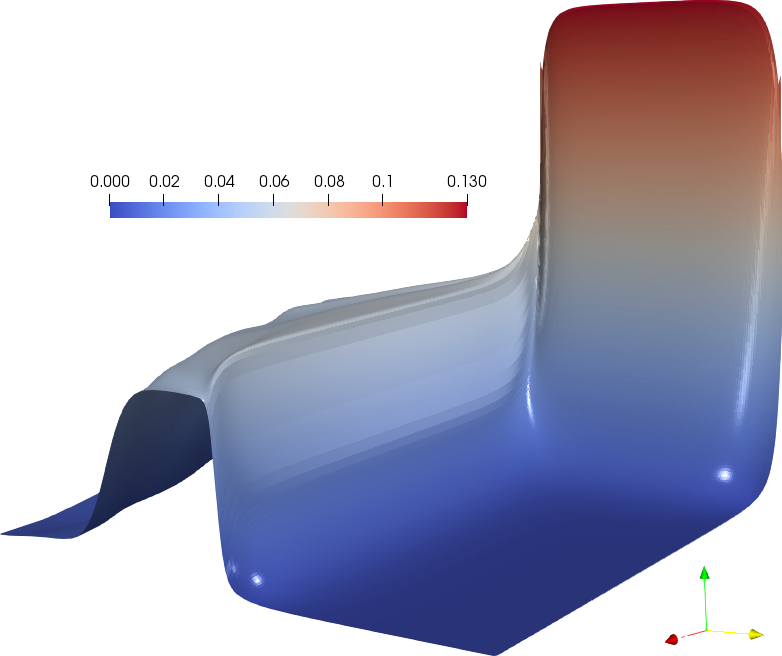}%
\caption{$\opsi_h$, AP scheme}
\end{subfigure}%
\begin{subfigure}[t]{0.25\textwidth}
\includegraphics[width=\textwidth]{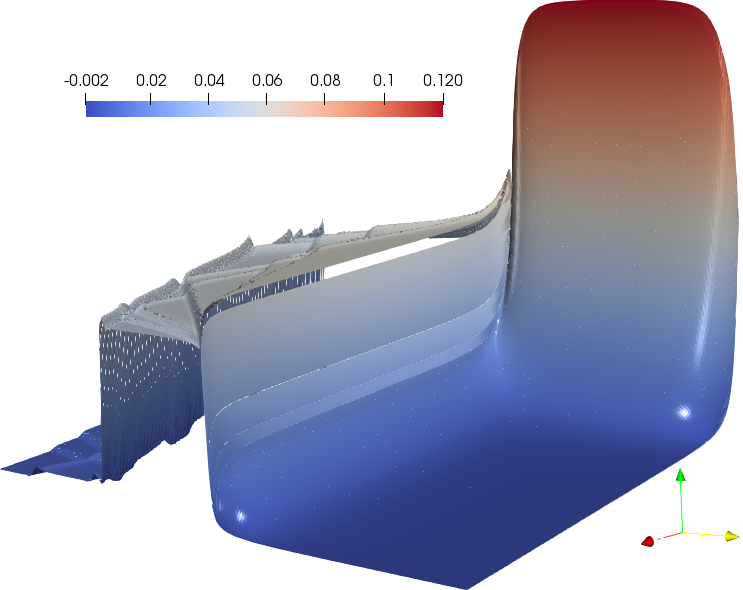}%
\caption{$\opsi_h$, dG1 scheme} \label{Fig:2D_two_dom:b}
\end{subfigure}%
\begin{subfigure}[t]{0.25\textwidth}
\includegraphics[width=\textwidth]{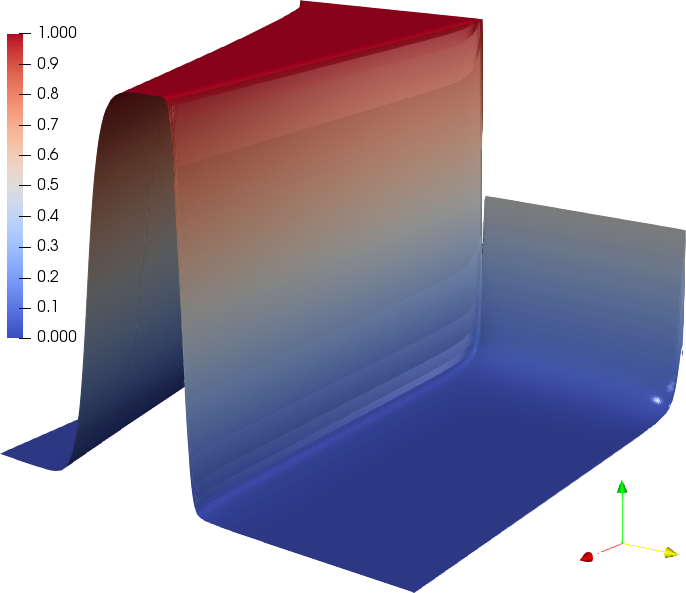}
\caption{$\psi_{h,1}$, AP scheme}
\end{subfigure}%
\begin{subfigure}[t]{0.25\textwidth}
\includegraphics[width=\textwidth]{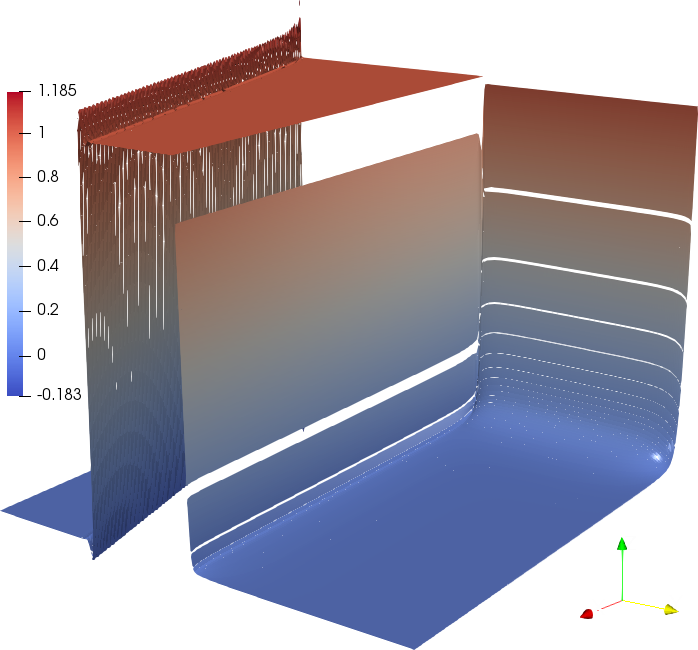}
\caption{$\psi_{h,1}$, dG1 scheme} \label{Fig:2D_two_dom:d}
\end{subfigure}\vspace{-\baselineskip}
\caption{Scalar $\opsi_h$ and angular flux $\psi_{h,1}$.}\vspace{-\baselineskip}
\label{Fig:2D_two_dom}
\end{figure}
We show in Figure~\ref{Fig:2D_two_dom} the scalar flux and the angular
flux corresponding to the first angle $\bOmega_1$. The angular fluxes
for the asymptotic preserving method are all non-negative, but the
upwind dG1 approximation gives negative values for the scalar flux and
the angular fluxes.  We observe that the minimum value of the dG1
scalar flux is approximately $-0.002$
(Panel~\eqref{Fig:2D_two_dom:b}), and the minimum value is $-0.183$
for the first angular flux (Panel~\eqref{Fig:2D_two_dom:d}). The dG1
approximation is obviously more accurate than the asymptotic preserving solution,
  but it experiences overshoots and undershoots at the interfaces
between the two materials, whereas the positive asymptotic preserving
solution does not.

\section{Conclusions} \label{Sec:Conclusions} 
We have introduced a (linear) positive asymptotic preserving method
for the approximation of the one-group radiation transport equation
(see~\eqref{AP_formulation_cG_plus_BC}). The approximation in space is
discretization agnostic: the approximation can be done with continuous
or discontinuous finite elements (or finite volumes).  The method is
first-order accurate in space. This type of accuracy is coherent with
Godunov's theorem since the method is linear.  The two key theoretical
results of the paper are Theorem~\ref{Th:diffusion_limit_AP} and
Theorem~\ref{Th:AP_positivity}. We have illustrated the performance of the method with
continuous finite elements. We have observed that the method converges
with the rate $\calO(h)$ in the $L^2$-norm on manufactured solutions.
It converges with the rate $\calO(h^2)$ in the $L^2$-norm in the
diffusion limit.  The method has also been observed to be non-negative
(in compliance with Theorem~\ref{Th:AP_positivity}).  It does not
suffer from overshoots like the upwind dG1 approximation at the
interfaces of optically thin and optically thick regions.

The present work is the first part of a ongoing project aiming at
developing techniques that are high-order accurate,
positivity-preserving, and robust in the diffusion limit.  To reach
higher-order accuracy the technique must be made nonlinear.  This
could be done by using smoothness indicators like in
\citep[\S4.3]{Guermond_Popov_2017}, or by using limiting technique, or
by enforcing positivity through inequality constraints (see \eg
\cite[\S4]{Hauck_McClarren_2010}).  Our progresses in this direction
will be reported elsewhere.

\bibliographystyle{abbrvnat} 
\bibliography{ref}

\begin{thebibliography}{24}
\providecommand{\natexlab}[1]{#1}
\providecommand{\url}[1]{\texttt{#1}}
\expandafter\ifx\csname urlstyle\endcsname\relax
  \providecommand{\doi}[1]{doi: #1}\else
  \providecommand{\doi}{doi: \begingroup \urlstyle{rm}\Url}\fi

\bibitem[Adams and Larsen(2002)]{adams_larsen_iter_methods}
M.~Adams and E.~Larsen.
\newblock Fast iterative methods for discrete-ordinates particle transport
  calculations.
\newblock \emph{Progress in Nuclear Energy}, 40\penalty0 (1):\penalty0 3--159,
  2002.

\bibitem[Adams(2001)]{Adams_2001}
M.~L. Adams.
\newblock Discontinuous finite element transport solutions in thick diffusive
  problems.
\newblock \emph{Nuclear Science and Engineering}, 137\penalty0 (3):\penalty0
  298--333, 2001.

\bibitem[Babu\v{s}ka and Suri(1992)]{Babuska_Suri_1992}
I.~Babu\v{s}ka and M.~Suri.
\newblock On locking and robustness in the finite element method.
\newblock \emph{SIAM J. Numer. Anal.}, 29\penalty0 (5):\penalty0 1261--1293,
  1992.

\bibitem[Buet and Cordier(2004)]{Buet_Cordier_2004}
C.~Buet and S.~Cordier.
\newblock Asymptotic preserving scheme and numerical methods for radiative
  hydrodynamic models.
\newblock \emph{C. R. Math. Acad. Sci. Paris}, 338\penalty0 (12):\penalty0
  951--956, 2004.

\bibitem[Buet and Despr{\'e}s(2006)]{Buet_Despres_2006}
C.~Buet and B.~Despr{\'e}s.
\newblock Asymptotic preserving and positive schemes for radiation
  hydrodynamics.
\newblock \emph{J. Comput. Phys.}, 215\penalty0 (2):\penalty0 717--740, 2006.

\bibitem[Buet et~al.(2012)Buet, Despr\'{e}s, and
  Franck]{Buet_Despres_Emmanuel_2012}
C.~Buet, B.~Despr\'{e}s, and E.~Franck.
\newblock Design of asymptotic preserving finite volume schemes for the
  hyperbolic heat equation on unstructured meshes.
\newblock \emph{Numer. Math.}, 122\penalty0 (2):\penalty0 227--278, 2012.

\bibitem[Gosse and Toscani(2002)]{Gosse_Toscani_2002}
L.~Gosse and G.~Toscani.
\newblock An asymptotic-preserving well-balanced scheme for the hyperbolic heat
  equations.
\newblock \emph{C. R. Math. Acad. Sci. Paris}, 334\penalty0 (4):\penalty0
  337--342, 2002.

\bibitem[Guermond and Kanschat(2010)]{Guermond_Kanschat_2010}
J.-L. Guermond and G.~Kanschat.
\newblock Asymptotic analysis of upwind discontinuous {G}alerkin approximation
  of the radiative transport equation in the diffusive limit.
\newblock \emph{SIAM J. Numer. Anal.}, 48\penalty0 (1):\penalty0 53--78, 2010.

\bibitem[Guermond and Popov(2016)]{Guermond_Popov_2016}
J.-L. Guermond and B.~Popov.
\newblock Invariant domains and first-order continuous finite element
  approximation for hyperbolic systems.
\newblock \emph{SIAM J. Numer. Anal.}, 54\penalty0 (4):\penalty0 2466--2489,
  2016.

\bibitem[Guermond and Popov(2017)]{Guermond_Popov_2017}
J.-L. Guermond and B.~Popov.
\newblock Invariant domains and second-order continuous finite element
  approximation for scalar conservation equations.
\newblock \emph{SIAM J. Numer. Anal.}, 55\penalty0 (6):\penalty0 3120--3146,
  2017.

\bibitem[Guermond et~al.(2019)Guermond, Popov, and
  Tomas]{Guermond_Popov_Tomas_2019}
J.-L. Guermond, B.~Popov, and I.~Tomas.
\newblock Invariant domain preserving discretization-independent schemes and
  convex limiting for hyperbolic systems.
\newblock \emph{Comput. Methods Appl. Mech. Engrg.}, 347:\penalty0 143--175,
  2019.

\bibitem[Hauck and McClarren(2010)]{Hauck_McClarren_2010}
C.~Hauck and R.~McClarren.
\newblock Positive {$P_N$} closures.
\newblock \emph{SIAM J. Sci. Comput.}, 32\penalty0 (5):\penalty0 2603--2626,
  2010.

\bibitem[Jin(1999)]{Jin_1999}
S.~Jin.
\newblock Efficient asymptotic-preserving ({AP}) schemes for some multiscale
  kinetic equations.
\newblock \emph{SIAM J. Sci. Comput.}, 21\penalty0 (2):\penalty0 441--454,
  1999.

\bibitem[Jin and Levermore(1996)]{Jin_Levermore_1996}
S.~Jin and C.~D. Levermore.
\newblock Numerical schemes for hyperbolic conservation laws with stiff
  relaxation terms.
\newblock \emph{J. Comput. Phys.}, 126\penalty0 (2):\penalty0 449--467, 1996.

\bibitem[Larsen(1983)]{Larsen_1983}
E.~W. Larsen.
\newblock On numerical solutions of transport problems in the diffusion limit.
\newblock \emph{Nuclear Science and Engineering}, 83\penalty0 (1):\penalty0
  90--99, 1983.

\bibitem[Larsen and Morel(1989)]{Larsen_Morel_1989}
E.~W. Larsen and J.~E. Morel.
\newblock Asymptotic solutions of numerical transport problems in optically
  thick, diffusive regimes. {II}.
\newblock \emph{J. Comput. Phys.}, 83\penalty0 (1):\penalty0 212--236, 1989.

\bibitem[Larsen et~al.(1987)Larsen, Morel, and
  Miller]{Larsen_Morel_Miller_1987}
E.~W. Larsen, J.~E. Morel, and W.~F. Miller, Jr.
\newblock Asymptotic solutions of numerical transport problems in optically
  thick, diffusive regimes.
\newblock \emph{J. Comput. Phys.}, 69\penalty0 (2):\penalty0 283--324, 1987.

\bibitem[Lesaint and Raviart(1974)]{Lesaint_Raviart_1974}
P.~Lesaint and P.-A. Raviart.
\newblock On a finite element method for solving the neutron transport
  equation.
\newblock In C.~de~Boor, editor, \emph{Mathematical aspects of finite elements
  in partial differential equations ({P}roc. {S}ympos., {M}ath. {R}es.
  {C}enter, {U}niv. {W}isconsin, {M}adison, {W}is., 1974)}, pages 89--123.
  Publication No. 33. Math. Res. Center, Univ. of Wisconsin-Madison, Academic
  Press, New York, 1974.

\bibitem[Li and Wang(2017)]{Li_Wang_2017}
Q.~Li and L.~Wang.
\newblock Implicit asymptotic preserving method for linear transport equations.
\newblock \emph{Commun. Comput. Phys.}, 22\penalty0 (1):\penalty0 157--181,
  2017.

\bibitem[{Petra} et~al.(2014){Petra}, {Schenk}, and {Anitescu}]{Pardiso_2014}
C.~G. {Petra}, O.~{Schenk}, and M.~{Anitescu}.
\newblock Real-time stochastic optimization of complex energy systems on
  high-performance computers.
\newblock \emph{Computing in Science Engineering}, 16\penalty0 (5):\penalty0
  32--42, Sep. 2014.

\bibitem[Ragusa et~al.(2012)Ragusa, Guermond, and
  Kanschat]{Ragusa_Guermond_Kanschat_2012}
J.~C. Ragusa, J.-L. Guermond, and G.~Kanschat.
\newblock A robust {$S_N$}-{DG}-approximation for radiation transport in
  optically thick and diffusive regimes.
\newblock \emph{J. Comput. Phys.}, 231\penalty0 (4):\penalty0 1947--1962, 2012.

\bibitem[Reed and Hill(1973)]{Reed_Hill_1973}
W.~Reed and T.~Hill.
\newblock Triangular mesh methods for the neutron transport equation.
\newblock Technical Report LA-UR-73-479, Los Alamos Scientific Laboratory, Los
  Alamos, NM,, 1973.

\bibitem[Wang and Ragusa(2010)]{wang_ragusa_dsa}
Y.~Wang and J.~Ragusa.
\newblock Diffusion synthetic acceleration for high-order discontinuous finite
  element ${S}_n$ transport schemes and application to locally refined
  unstructured meshes.
\newblock \emph{Nuclear Science and Engineering}, 166:\penalty0 145--166, 2010.

\bibitem[Xu and Zikatanov(1999)]{XuZik:99}
J.~Xu and L.~Zikatanov.
\newblock A monotone finite element scheme for convection-diffusion equations.
\newblock \emph{Math. Comp.}, 68\penalty0 (228):\penalty0 1429--1446, 1999.

\end{thebibliography}

\end{document}